\newtheorem{mytheo}{Theorem}[section]
\newtheorem{myexp}{Problem}[section]
\newtheorem{lem}[mytheo]{Lemma}
\newtheorem{preli}[mytheo]{Preliminaries}
\newtheorem{prop}[mytheo]{Proposition}
\newcounter{remark}
\newcounter{problem}
\def\@upcite#1#2{\textsuperscript{[{#1\if@tempswa , #2\fi}]}}
\title{Functionally-fitted energy-preserving methods for solving  oscillatory nonlinear Hamiltonian systems}
\def\shortTitle{Energy-Preserving Methods}
\def\myAMS{65L05, 65L06, 65L60, 65P10}
\def\myAbstract{
In the last few decades, numerical simulation for nonlinear oscillators has received
a great deal of attention, and many researchers have been concerned with the design and analysis
of numerical methods for solving oscillatory problems. In this paper, from the perspective of the
continuous finite element method, we propose and analyze new energy-preserving functionally fitted
methods, in particular trigonometrically fitted methods of an arbitrarily high order for solving oscillatory nonlinear Hamiltonian systems with a fixed frequency. To implement these new methods in a
widespread way, they are transformed into a class of continuous-stage Runge--Kutta methods. This
paper is accompanied by numerical experiments on oscillatory Hamiltonian systems such as the FPU
problem and nonlinear Schr\"odinger equation. The numerical results demonstrate the remarkable
accuracy and efficiency of our new methods compared with the existing high-order energy-preserving
methods in the literature.}
\begin{document}

\bibliographystyle{amsplain}
    %    Only \author and \address are required; other information is
    %    optional.  Remove any unused author tags.
\author[]{Yu-Wen Li}
\address{Department of Mathematics, Nanjing University, Nanjing 210093,
P.R.China}
\email{farseer1118@sina.cn}
    %\thanks{\yulShortAuthor : \yulThanks}
    
\author[]{Xinyuan Wu}
\address{Department of Mathematics, Nanjing University; State Key Laboratory
for Novel Software Technology at Nanjing University, Nanjing 210093,
P.R.China}
\email{xywu@nju.edu.cn}
\subjclass[2010]{Primary \myAMS}
\date{July 06, 2016}
    %\dedicatory{}
\begin{abstract}\myAbstract\end{abstract}
    %\short{TBD}
\maketitle
\markboth{YU-WEN Li AND XINYUAN WU}{\shortTitle}

%\tnotetext[]{The research is supported in part by the Natural
%Science Foundation of China under Grant 11271186, by NSFC and RS
%International Exchanges Project under Grant 11411130115,  by the
%Specialized Research Foundation for the Doctoral Program of Higher
%Education under Grant 20130091110041, and by the 985 Project at
%Nanjing University under Grant 9112020301.}
%65L05, 65L06, 65L60, 65P10

%=============================================================================================
\section{Introduction}
In this paper, we consider nonlinear Hamiltonian systems:
\begin{equation}\label{IVP}
y^{\prime}(t)=f(y(t))=J^{-1}\nabla H(y(t)),\quad y(t_{0})=y_{0}\in\mathbb{R}^{d},
\end{equation}
where $d=2d_{1}, f: \mathbb{R}^{d}\rightarrow\mathbb{R}^{d}, H
: \mathbb{R}^{d}\rightarrow\mathbb{R}$ are sufficiently smooth
functions and
\begin{equation*}
J=\left(\begin{array}{cc}O_{d_{1}\times d_{1}}&I_{d_{1}\times d_{1}}\\-I_{d_{1}\times d_{1}}&O_{d_{1}\times d_{1}}\end{array}\right)
\end{equation*}
is the canonical symplectic matrix. It is well known that the
flow of \eqref{IVP} preserves the symplectic form $dy\wedge Jdy$ and
the Hamiltonian or
 energy $H(y(t))$. In the spirit of geometric
numerical integration, it is a natural idea to design schemes that
preserve both the symplecticity of the flow and Hamiltonian
function. Unfortunately, a numerical scheme cannot achieve this goal
unless it generates the exact solution (see, e.g. \cite{Hairer2006},
page 379). Hence researchers face a choice between preserving
symplecticity or energy and many of them have given more weight on
the former in the last decades, and readers are referred to
\cite{Hairer2006} and references therein. Whereas investigations on
energy-preserving (EP) methods are relatively insufficient (see,
e.g.
\cite{Betsch2000,Brugnano2010,Brugnano2012b,Celledoni2009,Celledoni2010,French1990,Gonzalez1996,Hairer2010,Mclachlan1999,Tang2012}).
{Comparing to symplectic methods, EP methods are beneficial for
improving nonlinear stability, easier to adapt the time step and
more suitable for the integration of chaotic systems (see, e.g.
\cite{Celledoni2012,Hairer1997,Simos1993,Skeel1993}).}

On the other hand, in scientific computing and modelling, the design
and analysis of methods for periodic or oscillatory systems have
been considered by many authors (see, e.g.
\cite{Bettis1970,Gautschi1961,Hairer2000,Petzold1997,Wang2015,Yang2009}).
Generally, these methods utilize a priori information of special
problems and they are more efficient than general-purpose methods. A
popular approach to constructing methods suitable for oscillatory
problems is {using} the functionally-fitted (FF) condition, namely,
{deriving a suitable method} by requiring it to integrate members of
a given finite-dimensional function space $X$ exactly. If $X$
incorporates trigonometrical or exponential functions, the
corresponding methods are also named by trigonometrically-fitted
(TF) or exponentially-fitted (EF) methods (see, e.g.
\cite{Coleman1996,Ixaru2004,Ozawa2001,Simos1998}).

Therefore, combining the ideas of the EF/TF and structure-preserving
methods is a promising approach to developing numerical methods
which allow long-term computation of solutions to oscillatory
Hamiltonian systems \eqref{IVP}. Just as the research of symplectic
and EP methods, EF/TF symplectic methods have been studied
extensively by many authors (see, e.g.
\cite{Calvo2008,Calvo2010a,Calvo2010b,Franco2007,VandenB2003,VandeVyver2006,Wu2012}).
By contrast, as far as we know, only a few papers paid attention to
the EF/TF EP methods (see, e.g.
\cite{Miyatake2014,Miyatake2015,Wang2012}). Usually the existing
EF/TF EP methods are derived in the context of continuous-stage
Runge--Kutta (RK) methods. The coefficients in these methods are
determined by a system of equations resulting from EF/TF, EP and
symmetry conditions. As mentioned at the end of \cite{Miyatake2014},
it is not easy to find such a system having a unique solution in the
case of deriving high-order methods. What's more, how to verify the
algebraic order of such methods falls into a question. A common way
is to check order conditions related to rooted trees. Again, this is
inconvenient in the high-order setting since the number of trees
increases extremely fast as the order grows. In this paper, we will
construct FF EP methods based on the continuous finite element
method, which is inherently energy-preserving (see, e.g.
\cite{Betsch2000,French1990,Tang2012}). Intuitively, we are expected
to increase the order of the method through enlarging the finite
element space. By adding trigonometrical functions to the space, the
corresponding method is naturally trigonometrically fitted. Thus we
are hopeful of constructing FF EP methods, in particular TF EP
methods, of arbitrarily high order.

The outline of this paper is as follows. In Section \ref{TFCFE}, we
construct FF CFE methods and present important geometric properties
of them. In Section \ref{CRKK}, we interpret them as
continuous-stage Runge--Kutta methods and analyse the algebraic
order. We then discuss implementation details of these new methods
in Section \ref{IMPLE}. Numerical results  are shown in section
\ref{NE}, including the comparison between our new TF EP methods and
other prominent structure-preserving methods in the literature. The
last section is concerned with the conclusion and discussion.

\section{Functionally-fitted continuous finite element methods for Hamiltonian systems}\label{TFCFE}
\begin{preli}
Throughout this paper, we consider the IVP \eqref{IVP}
on the time interval $I=[t_{0},T]$, which is equally partitioned
into $t_{0}<t_{1}<\ldots<t_{N}=T$, with $t_{n}=t_{0}+nh$ for
$n=0,1,\ldots,N$. A function space
$Y$=span$\left\{\varphi_{0},\ldots,\varphi_{r-1}\right\}$ means that
\begin{equation*}
Y=\left\{w : w(t)=\sum_{i=0}^{r-1}W_{i}\varphi_{i}(t), W_{i}\in\mathbb{R}^{d}\right\}.
\end{equation*}
Here, $\{\varphi_{i}\}_{i=0}^{r-1}$ are supposed to be
sufficiently smooth and linearly independent on $I$. A function $w$
on $I$ is {called} a piecewise $Y$-type function if for any $0\leq
n\leq N-1$, there exists a function $g\in Y$, such that
$w|_{(t_{n},t_{n+1})}=g|_{(t_{n},t_{n+1})}.$

Sometimes, it is convenient to introduce the transformation
$t=t_{0}+\tau h$ for $\tau\in[0,1]$. Accordingly we denote
$Y_{h}(t_{0})=\left\{v \text{ on } [0,1] : v(\tau)=w(t_{0}+\tau h), w\in
Y\right\}$. Hence
$Y_{h}(t_{0})$=span$\left\{\tilde{\varphi}_{0},\ldots,\tilde{\varphi}_{r-1}\right\}$,
where $\tilde{\varphi}_{i}(\tau)=\varphi_{i}(t_{0}+\tau h)$ for
$i=0,1,\ldots,r-1$. In what follows, lowercase Greek
letters such as $\tau,\sigma,\alpha$ always indicate variables on
the interval [0,1] unless confusions arise.

Given two integrable functions (scalar-valued or vector-valued) $w_{1}$
and $w_{2}$ on $[0,1]$, the inner product
$\langle\cdot,\cdot\rangle$ is defined by
\begin{equation*}
\langle w_{1},w_{2}\rangle=\langle w_{1}(\tau),w_{2}(\tau)\rangle_{\tau}=\int_{0}^{1}w_{1}(\tau)\cdot w_{2}(\tau)d\tau,
\end{equation*}
where $\cdot$ is the entrywise multiplication operation if
$w_{1},w_{2}$ are both vector-valued functions of the same length.
\end{preli}

Given two finite-dimensional function spaces $X$ and $Y$ whose
members are $\mathbb{R}^{d}$-valued, the continuous finite element
method for \eqref{IVP} is described as follows. \par Find a
continuous piecewise X-type function $U(t)$ on $I$ with
$U(t_{0})=y_{0}$, such that
\begin{equation}\label{CFE}
\int_{I}v(t)\cdot(U^{\prime}(t)-f(U(t)))dt=0,
\end{equation}
for any piecewise Y-type function $v(t)$, where $U(t)\approx y(t)$
on $I$ and $y(t)$ solves the IVP \eqref{IVP}. {The term `continuous
finite element'(CFE) comes from the continuity of the finite element
solution $U(t)$.} Since \eqref{CFE} deals with an initial value
problem, we need only to consider it on $[t_{0},t_{0}+h]$.\par Find
$u\in X_{h}(t_{0})$ {with} $u(0)=y_{0}$, such that
\begin{equation}\label{CFE2}
\langle v,u^{\prime}\rangle=h\langle v,f\circ u\rangle,
\end{equation}
for any $v\in Y_{h}(t_{0})$, where $$u(\tau)=U(t_{0}+\tau h)\approx
y(t_{0}+\tau h)$$ for $\tau\in[0,1]$. Since $U(t)$ is continuous,
$y_{1}=u(1)$ is the initial value of the local problem on the next
interval $[t_{1},t_{2}]$. Thus we can solve the global variational
problem \eqref{CFE} on $I$ step by step.

In the special case of
\begin{equation*}
X=\text{span}\left\{1,t,\ldots,t^{r}\right\},\quad Y=\text{span}\left\{1,t,\ldots,t^{r-1}\right\},
\end{equation*}
\eqref{CFE} {reduces to} the classical continuous finite element
method (see, e.g. \cite{Betsch2000,Hulme1972}) {denoted by CFE$r$ in
this paper.} For the purpose of deriving functionally-fitted
methods, we generalize $X$ and $Y$ a little:
\begin{equation}\label{FF}
Y=\text{span}\left\{\varphi_{0}(t),\ldots,\varphi_{r-1}(t)\right\},\quad
X=\text{span}\left\{1,\int_{t_{0}}^{t}\varphi_{0}(s)ds,\ldots,\int_{t_{0}}^{t}\varphi_{r-1}(s)ds\right\}.
\end{equation}
Thus it is
sufficient to give $X$ or $Y$ since they can be determined by each
other. Besides, $Y$ is supposed to be invariant under translation
and reflection , namely,
\begin{equation}\label{affine}
\begin{aligned}
&v(t)\in Y \Rightarrow v(t+c)\in Y \text{    for any   }c\in\mathbb{R},\\
&v(t)\in Y \Rightarrow v(-t)\in Y.\\
\end{aligned}
\end{equation}
Clearly, $Y_{h}(t_{0})$ and $X_{h}(t_{0})$ are irrelevant to $t_{0}$
provided \eqref{affine} holds. For convenience, we simplify
$Y_{h}(t_{0})$ and $X_{h}(t_{0})$ by $Y_{h}$ and $X_{h}$
respectively. In the remainder of this paper, we denote the CFE
method \eqref{CFE} or \eqref{CFE2} based on the general function
spaces \eqref{FF} satisfying the condition \eqref{affine} by
FFCFE$r$.

It is noted that the {FFCFE$r$} method \eqref{CFE2} is defined by a
variational problem and the well-definedness of this problem has not
been confirmed yet. Here we presume the existence and uniqueness of
the solution to \eqref{CFE2}. This assumption will be proved in the
next section. With this premise, we are able to present three
significant properties of the {FFCFE$r$} method. At first, by the
definition of the variational problem \eqref{CFE}, the {FFCFE$r$}
method is functionally fitted with respect to the space $X$.
\begin{mytheo}
The {FFCFE$r$} method \eqref{CFE} solves the IVP \eqref{IVP} whose
solution is a piecewise $X$-type function {without any
error.}
\end{mytheo}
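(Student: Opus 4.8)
The plan is to verify directly that the exact solution, when it happens to be a piecewise $X$-type function, satisfies the defining variational equation \eqref{CFE}, and then invoke the presumed uniqueness of the FFCFE$r$ solution to conclude that the numerical solution coincides with it. Suppose $y(t)$ solves the IVP \eqref{IVP} and, in addition, $y$ is a continuous piecewise $X$-type function on $I$. Since $y$ is an exact solution, $y^{\prime}(t)-f(y(t))=0$ identically on $I$, so for \emph{any} piecewise $Y$-type test function $v(t)$ we have $\int_{I}v(t)\cdot(y^{\prime}(t)-f(y(t)))\,dt=\int_{I}v(t)\cdot 0\,dt=0$. Moreover $y$ is continuous and $y(t_{0})=y_{0}$, so $y$ satisfies every requirement imposed on the finite element solution $U$ in \eqref{CFE}. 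Hence $U=y$ is a solution of the discrete problem.

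Next I would argue uniqueness to upgrade ``a solution'' to ``the solution.'' Working interval by interval as the excerpt prescribes, on $[t_{0},t_{0}+h]$ the local problem \eqref{CFE2} asks for $u\in X_{h}(t_{0})$ with $u(0)=y_{0}$ satisfying $\langle v,u^{\prime}\rangle=h\langle v,f\circ u\rangle$ for all $v\in Y_{h}(t_{0})$; by the standing assumption (to be proved in the next section) this solution exists and is unique. The restriction of $y$ to this interval, rescaled to $[0,1]$, lies in $X_{h}(t_{0})$, takes the value $y_{0}$ at $\tau=0$, and — as just shown — satisfies the variational identity. Therefore it \emph{is} the FFCFE$r$ iterate on this step, and in particular $y_{1}=u(1)=y(t_{1})$. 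Feeding $y(t_{1})$ as the initial value on $[t_{1},t_{2}]$ and repeating, an induction on $n$ shows the FFCFE$r$ solution agrees with $y$ on every subinterval, hence on all of $I$. Thus the method reproduces $y$ exactly, with no error.

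The only genuinely delicate point is that ``piecewise $X$-type'' must be read as \emph{continuous} piecewise $X$-type, matching the continuity built into \eqref{CFE}; with the spaces \eqref{FF} one should check that $X$ (which contains the constants and the antiderivatives $\int_{t_{0}}^{t}\varphi_{i}$) is rich enough that a solution $y$ of \eqref{IVP} lying in the piecewise span can indeed be taken continuous and to interpolate $y_{0}$, but this is immediate since $y$ itself is continuous by being a solution of the ODE. Everything else is a one-line substitution plus the invocation of the uniqueness result assumed in the text; there is no real obstacle beyond making sure the hypothesis is stated so that $y$ qualifies as an admissible $U$.
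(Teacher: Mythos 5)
Your argument is correct and is precisely the reasoning the paper intends: the paper states this theorem without proof, remarking only that it follows ``by the definition of the variational problem,'' which is exactly your observation that the exact solution makes $y'-f(y)$ vanish identically (so every variational test is trivially satisfied) combined with the presumed uniqueness of the solution to \eqref{CFE2} and a step-by-step induction over the subintervals. No gaps; your careful note about continuity and admissibility of $y$ as a candidate $U$ is a reasonable extra precaution but does not change the substance.
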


Moreover, the {FFCFE$r$} method is inherently energy preserving
method. The next theorem confirms this point.
\begin{mytheo}\label{EP}
The {FFCFE$r$} method \eqref{CFE2} exactly preserves the Hamiltonian
$H$: $H(y_{1})=H(y_{0})$.
\end{mytheo}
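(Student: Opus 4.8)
The plan is to evaluate $H(y_1)-H(y_0)$ directly via the fundamental theorem of calculus and then feed a well-chosen test function into the variational identity \eqref{CFE2}, exploiting the skew-symmetry of $J$. Since $y_0=u(0)$ and $y_1=u(1)$, and $u$ is a combination of the (sufficiently smooth) basis functions of $X_h$, the map $\tau\mapsto H(u(\tau))$ is $C^1$, so
\[
H(y_1)-H(y_0)=\int_0^1\frac{d}{d\tau}H(u(\tau))\,d\tau=\int_0^1\nabla H(u(\tau))\cdot u'(\tau)\,d\tau=\langle\nabla H\circ u,u'\rangle .
\]
Because $f=J^{-1}\nabla H$ we have $\nabla H=Jf$, so this equals $\langle Jf\circ u,u'\rangle$, and the goal reduces to showing $\langle Jf\circ u,u'\rangle=0$.

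The structural ingredient I would use is that $u'\in Y_h$. This is precisely the point of the pairing \eqref{FF}: the basis of $X$ consists of $1$ together with the antiderivatives $\int_{t_0}^{t}\varphi_i(s)\,ds$, whose derivatives are $\varphi_0,\dots,\varphi_{r-1}$, so differentiating any element of $X$ lands in $Y$; transcribing this under $t=t_0+\tau h$ gives $u'\in Y_h$. Moreover $Y_h$ is a space of $\mathbb{R}^d$-valued functions whose $d$ scalar components all range over the same scalar span of $\tilde\varphi_0,\dots,\tilde\varphi_{r-1}$, hence $Y_h$ is invariant under multiplication by the constant matrix $J$, and therefore $Ju'\in Y_h$ as well.

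With these two remarks the computation would proceed as follows. Using $J^{\top}=-J$,
\[
\langle Jf\circ u,u'\rangle=\langle f\circ u,J^{\top}u'\rangle=-\langle f\circ u,Ju'\rangle .
\]
Since $Ju'\in Y_h$, it is an admissible test function in \eqref{CFE2}, which gives $\langle Ju',u'\rangle=h\,\langle Ju',f\circ u\rangle$, i.e. $\langle f\circ u,Ju'\rangle=\tfrac1h\langle Ju',u'\rangle$. Finally $\langle Ju',u'\rangle=\int_0^1 u'(\tau)^{\top}Ju'(\tau)\,d\tau=0$ because $x^{\top}Jx=0$ for every $x\in\mathbb{R}^d$, again by skew-symmetry. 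Chaining the displayed identities yields $H(y_1)-H(y_0)=-\langle f\circ u,Ju'\rangle=0$.

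I expect the only step requiring real care to be the justification that $u'$, equivalently $Ju'$, lies in $Y_h$ and may therefore be used as a test function: this is exactly where the coupling of $X$ and $Y$ in \eqref{FF} is essential, and it is the reason the generalized CFE method remains inherently energy-preserving; everything else is the fundamental theorem of calculus combined with $x^{\top}Jx=0$. I would also emphasize that the whole argument presupposes the existence of a solution $u$ to \eqref{CFE2}, whose proof is deferred to the next section.
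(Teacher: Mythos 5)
Your proof is correct and is essentially the paper's own argument: both apply the fundamental theorem of calculus to $H(u(\tau))$, test the variational identity \eqref{CFE2} with $J^{-1}u'$ (equivalently $Ju'=-J^{-1}u'$, as you do), and conclude by the skew-symmetry identity $x^{\intercal}Jx=0$; your justification that $u'\in Y_h$ via the antiderivative structure of \eqref{FF} and that $Y_h$ is componentwise-closed under multiplication by $J$ matches what the paper asserts. The only cosmetic difference is that the paper's $\langle\cdot,\cdot\rangle$ is entrywise, so it first sums over components (its equation \eqref{inner}) to obtain the scalar $L^2$ pairing you use directly.
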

\begin{proof}
Firstly, given a vector $V$, its $i$th entry is denoted by $V_{i}$.
For each function $w\in Y_{h}$, setting
$v(\tau)=w(\tau)\cdot e_{i}\in Y_{h}$ in \eqref{CFE2} leads to
\begin{equation*}
\int_{0}^{1}w(\tau)_{i}u^{\prime}(\tau)_{i}d\tau=h\int_{0}^{1}w(\tau)_{i}f(u(\tau))_{i}d\tau,\quad
i=1,2,\ldots,d,
\end{equation*}
where $e_{i}$ is {the $i$th vector of units.} Thus
\begin{equation}\label{inner}
\begin{aligned}
&\int_{0}^{1}w(\tau)^{\intercal}u^{\prime}(\tau)d\tau=\sum_{i=1}^{d}\int_{0}^{1}w(\tau)_{i}u^{\prime}(\tau)_{i}d\tau\\
&=\sum_{i=1}^{d}h\int_{0}^{1}w(\tau)_{i}f(u(\tau))_{i}d\tau=h\int_{0}^{1}w(\tau)^{\intercal}f(u(\tau))d\tau.\\
\end{aligned}
\end{equation}
Since $u(\tau)\in X_{h}$, $u^{\prime}(\tau)\in Y_{h}$ and $J^{-1}u^{\prime}(\tau)\in Y_{h}$,
by taking $w(\tau)=J^{-1}u^{\prime}(\tau)$ in \eqref{inner}, we have
\begin{equation*}
\begin{aligned}
&H(y_{1})-H(y_{0})\\
&=\int_{0}^{1}\frac{d}{d\tau}H(u(\tau))d\tau
=\int_{0}^{1}u^{\prime}(\tau)^{\intercal}\nabla H(u(\tau))d\tau\\
&=\int_{0}^{1}(J^{-1}u^{\prime}(\tau))^{\intercal}f(u(\tau))d\tau
=h^{-1}\int_{0}^{1}u^{\prime}(\tau)^{\intercal}Ju^{\prime}(\tau)d\tau
=0.\\
\end{aligned}
\end{equation*}
This completes the proof.\end{proof}\qed

The FFCFE$r$ method can also be viewed as a one-step method
$\Phi_{h}: y_{0}\rightarrow y_{1}=u(1)$. It is well known that
reversible methods show a better long-term behaviour than
nonsymmetric ones when applied to reversible differential systems
such as \eqref{IVP} (see, e.g. \cite{Hairer2006}). This fact
motivates the investigation of the symmetry of the FFCFE$r$ method.
Since spaces $X$ and $Y$ satisfy the invariance \eqref{affine},
which is a kind of symmetry, the FFCFE$r$ method is expected to be
symmetric.
\begin{mytheo}\label{symmetry}
The {FFCFE$r$} method \eqref{CFE2} is symmetric provided
\eqref{affine} holds.
\end{mytheo}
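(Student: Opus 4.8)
The plan is to prove symmetry straight from the definition of the one-step map. Write $\Phi_h\colon y_0\mapsto y_1=u(1)$ for the map defined by \eqref{CFE2}; it suffices to show that whenever $y_1=\Phi_h(y_0)$ one also has $y_0=\Phi_{-h}(y_1)$, i.e. $\Phi_{-h}\circ\Phi_h=\mathrm{id}$. I would obtain this by taking the FFCFE$r$ solution $u$ on $[0,1]$ attached to $y_0$ and showing that its reflection $\hat u(\tau):=u(1-\tau)$ is precisely the FFCFE$r$ solution for the step $-h$ with initial value $y_1$.

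The one place where hypothesis \eqref{affine} enters is a short preliminary on the discrete spaces. First, for $w\in Y$ the function $s\mapsto w(2t_0+h-s)$ again lies in $Y$ (reflection turns $w$ into $s\mapsto w(-s)\in Y$, and a translation then produces $s\mapsto w(2t_0+h-s)$); evaluating at $s=t_0+\tau h$ shows that $v(\tau)\in Y_h\Rightarrow v(1-\tau)\in Y_h$, so $Y_h$ is invariant under the reflection $\tau\mapsto 1-\tau$. Second, the same type of argument with $s\mapsto w(2t_0-s)$ shows that the space built from $Y$ with step $-h$ coincides with $Y_h$, and likewise for $X$; combined with the observation $X_h=\{w\colon w'\in Y_h\}$ (which holds because $\dim X=\dim Y+1$, $1\in X$ and $X'=Y$), this says that the FFCFE$r$ problem for step $-h$ is, over the \emph{same} pair of spaces, to find $\hat u\in X_h$ with prescribed $\hat u(0)$ and $\langle v,\hat u'\rangle=-h\langle v,f\circ\hat u\rangle$ for all $v\in Y_h$.

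Granting this, the verification is two changes of variable. From $\hat u=u(1-\cdot)$ we get $\hat u(0)=u(1)=y_1$ and $\hat u'=-u'(1-\cdot)\in Y_h$, hence $\hat u\in X_h$. For $v\in Y_h$ put $w(\sigma):=v(1-\sigma)$, which lies in $Y_h$ by the preliminary; the substitution $\sigma=1-\tau$ gives $\langle v,\hat u'\rangle=-\langle w,u'\rangle$ and $\langle v,f\circ\hat u\rangle=\langle w,f\circ u\rangle$. Applying \eqref{CFE2} with the test function $w$ yields $\langle w,u'\rangle=h\langle w,f\circ u\rangle$, and stringing these together gives $\langle v,\hat u'\rangle=-h\langle v,f\circ\hat u\rangle$ for every $v\in Y_h$. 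Thus $\hat u$ satisfies the FFCFE$r$ equations for step $-h$ with $\hat u(0)=y_1$; by the existence-and-uniqueness result of the next section (which applies verbatim to step $-h$, since it is again an FFCFE$r$ problem over $X_h,Y_h$), $\hat u$ is \emph{the} function defining $\Phi_{-h}(y_1)$, so $\Phi_{-h}(y_1)=\hat u(1)=u(0)=y_0$.

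The substitutions and the sign bookkeeping are routine; the only genuinely delicate point is the preliminary, namely checking that \eqref{affine} really does render $Y_h$ (hence $X_h$) invariant under $\tau\mapsto1-\tau$ and independent of the sign of $h$, so that the reflected function $\hat u$ lands in the correct trial and test spaces and solves the correctly-signed variational equation. Once that is pinned down, symmetry is immediate.
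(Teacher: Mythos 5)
Your proposal is correct and follows essentially the same route as the paper: both arguments hinge on the reflection $\hat u(\tau)=u(1-\tau)$ together with the substitution $\tau\mapsto 1-\tau$ in the test functions, and both use \eqref{affine} to guarantee $X_h=X_{-h}$, $Y_h=Y_{-h}$ and the invariance of these spaces under reflection. The only cosmetic difference is that the paper phrases the conclusion as ``the adjoint method coincides with the method itself'' while you verify $\Phi_{-h}\circ\Phi_h=\mathrm{id}$ directly (invoking uniqueness explicitly, which the paper leaves implicit); these are equivalent formulations of symmetry.
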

\begin{proof}
According to \eqref{affine}, we have $X_{h}=X_{-h}, Y_{h}=Y_{-h}.$
Exchanging $y_{0}\leftrightarrow y_{1}$ and replacing $h$ with $-h$
in \eqref{CFE2} give: $u(0)=y_{1}$, $y_{0}=u(1)$, where
\begin{equation*}
\langle v(\tau),u^{\prime}(\tau)\rangle_{\tau}=-h\langle
v(\tau),f(u(\tau))\rangle_{\tau},\quad u(\tau)\in X_{-h}=X_{h},
\end{equation*}
for {each} $v(\tau)\in Y_{-h}=Y_{h}$. Setting
$u_{1}(\tau)=u(1-\tau)\in X_{h}$ and $\tau\to1-\tau$ leads to
$u_{1}(0)=y_{0}$, $y_{1}=u_{1}(1)$,  where
\begin{equation*}
\langle v_{1}(\tau),u_{1}^{\prime}(\tau)\rangle_{\tau}=h\langle v_{1}(\tau),f(u_{1}(\tau))\rangle_{\tau},
\end{equation*}
for {each} $v_{1}(\tau)=v(1-\tau)\in Y_{h}.$ This method is exactly
the same as \eqref{CFE2}, which means that the {FFCFE$r$} method is
symmetric.
\end{proof}\qed

It is well known that polynomials cannot approximate oscillatory
functions satisfactorily. If the problem \eqref{IVP} has a fixed
frequency $\omega$ which can be evaluated effectively in advance,
then the function space containing the pair $\left\{\sin(\omega t),
\cos(\omega t)\right\}$ seems to be a more promising candidate for
$X$ and $Y$ than the polynomial space. {For example, possible $Y$
spaces for deriving the TF CFE method are}
{\begin{equation}\label{TF1CFE} Y_{1}=\left\{\begin{aligned}
&\text{span}\left\{\cos(\omega t),\sin(\omega t)\right\},r=2,\\
&\text{span}\left\{1,t,\ldots,t^{r-3},\cos(\omega t),\sin(\omega t)\right\},r\geq3,\\
\end{aligned}\right.
\end{equation}}
\begin{equation}\label{TF2CFE}
Y_{2}=\text{span}\left\{ \cos(\omega t),\sin(\omega t),\ldots,\cos(k\omega
t),\sin(k\omega t)\right\}, r=2k,
\end{equation}
and
\begin{equation}\label{TF3CFE}
Y_{3}=\text{span}\left\{1,t,\ldots,t^{p},t\cos(\omega t),t\sin(\omega
t),\ldots,t^{k}\cos(\omega t),t^{k}\sin(\omega t)\right\}.
\end{equation}
{Correspondingly, by equipping the FF CFE method with the space
$Y=Y_{1}, Y_{2}$ or $Y_{3}$}, we obtain three families of TF CFE
methods. According to Theorem \ref{EP} and Theorem \ref{symmetry},
all of them are symmetric energy-preserving methods. To exemplify
this framework of the TF CFE method, in numerical experiments, we
will test the TF CFE method denoted by TFCFE$r$ and TF2CFE$r$ based
on the spaces \eqref{TF1CFE} and \eqref{TF2CFE}. It is noted that
TFCFE2 and TF2CFE2 coincide.

\section{Interpretation as continuous-stage Runge--Kutta methods and the analysis on the algebraic order}\label{CRKK}
An interesting connection between CFE methods and RK-type methods
has been shown in a few papers (see, e.g.
\cite{Bottasso1997,Hulme1972,Tang2012}). Since the RK methods are
dominant in the numerical integration of ODEs, it is meaningful and
useful to transform the FFCFE$r$ method to the corresponding RK-type
method which has been widely and conventionally used in
applications. After the transformation, the FFCFE$r$ method can be
analysed and implemented by standard techniques in ODEs
conveniently. To this end, it is helpful to introduce the projection
operation $P_{h}$. Given a continuous $\mathbb{R}^{d}$-valued
function $w$ on $[0,1]$, its projection onto $Y_{h}$, denoted by
$P_{h}w$, is defined by
\begin{equation}\label{DEF}
\langle v,P_{h}w\rangle=\langle v,w\rangle,\quad\text{for any   } v\in Y_{h}.
\end{equation}
Clearly, $P_{h}w(\tau)$ can be uniquely expressed as a linear
combination of $\{\tilde{\varphi}_{i}(\tau)\}_{i=0}^{r-1}$:
$$P_{h}w(\tau)=\sum_{i=0}^{r-1}U_{i}\tilde{\varphi}_{i}(\tau),\quad U_{i}\in\mathbb{R}^{d}.$$
Taking $v(\tau)=\tilde{\varphi}_{i}(\tau)e_{j}$ in \eqref{DEF} for
$i=0,1,\ldots,r-1$ and $j=1,\ldots,d$, it can be observed that
coefficients $U_{i}$ satisfy the equation
\begin{equation*}
M\otimes I_{d\times d}\left(\begin{array}{c}U_{0}\\ \vdots\\U_{r-1}\end{array}\right)=
\left(\begin{array}{c}\langle \tilde{\varphi}_{0},w\rangle\\ \vdots\\ \langle\tilde{\varphi}_{r-1},w\rangle\end{array}\right),
\end{equation*}
where
\begin{equation*}
M=(\langle\tilde{\varphi}_{i},\tilde{\varphi}_{j}\rangle)_{0\leq i,j\leq r-1}.
\end{equation*}
Since $\left\{\tilde{\varphi}_{i}\right\}_{i=0}^{r-1}$ are linearly
independent for $h>0$, the stiffness matrix $M$ is nonsingular.
Thus the projection can be explicitly expressed by
\begin{equation*}
P_{h}w(\tau)=\langle P_{\tau,\sigma},w(\sigma)\rangle_{\sigma},
\end{equation*}
where
\begin{equation}\label{explicit}
P_{\tau,\sigma}=(\tilde{\varphi}_{0}(\tau),\ldots,\tilde{\varphi}_{r-1}(\tau))M^{-1}(\tilde{\varphi}_{0}(\sigma),\ldots,\tilde{\varphi}_{r-1}(\sigma))^{\intercal}.
\end{equation}
Clearly, $P_{\tau,\sigma}$ can be calculated by a basis other than
$\left\{\tilde{\varphi}_{i}\right\}_{i=0}^{r-1}$ since they only differ
in a linear transformation. If
$\left\{\phi_{0},\ldots,\phi_{r-1}\right\}$ is an orthonormal basis
of $X_{h}$ {under the inner product $\langle\cdot,\cdot\rangle$,}
then $P_{\tau,\sigma}$ admits a simpler expression:
\begin{equation}\label{PCOEFF}
P_{\tau,\sigma}=\sum_{i=0}^{r-1}\phi_{i}(\tau)\phi_{i}(\sigma).
\end{equation}
Now using \eqref{CFE2} and the definition \eqref{DEF}
of the operator $P_{h}$, we obtain that $u^{\prime}=hP_{h}(f\circ
u)$ and
\begin{equation}\label{projection}
u^{\prime}(\tau)=h\langle P_{\tau,\sigma},f(u(\sigma))\rangle_{\sigma}.
\end{equation}
Integrating the above equation with respect to $\tau$, we transform
the {FFCFE$r$} method \eqref{CFE2} into the {continuous-stage} RK
method:
\begin{equation}\label{CRK}
\left\{\begin{aligned}
&u(\tau)=y_{0}+h\int_{0}^{1}A_{\tau,\sigma}f(u(\sigma))d\sigma,\\
&y_{1}=u(1),\\
\end{aligned}\right.
\end{equation}
where
\begin{equation}\label{Aexplicit}
A_{\tau,\sigma}=\int_{0}^{\tau}P_{\alpha,\sigma}d\alpha=\sum_{i=0}^{r-1}\int_{0}^{\tau}\phi_{i}(\alpha)d\alpha\phi_{i}(\sigma).
\end{equation}
In particular,
\begin{equation}\label{Legendre}
\phi_{i}(\tau)=\hat{p}_{i}(\tau),
\end{equation}
for the CFE$r$ method for $i=0,1,\ldots,r-1$, {where
$\hat{p}_{i}(\tau)$ is the shifted Legendre polynomial of degree $i$
on $[0,1]$, scaled in order to be orthonormal. Thus} the CFE$r$
method in the form \eqref{CRK} is identical to the energy-preserving
collocation method of order $2r$ (see \cite{Hairer2010}) or the
Hamiltonian boundary value method HBVM$(\infty,r)$ (see, e.g.
\cite{Brugnano2010}). For the {FFCFE$r$} method, since
$P_{\tau,\sigma}, A_{\tau,\sigma}$ are functions of variable $h$ and
$u(\tau)$ is implicitly {determined} by \eqref{CRK}, it is necessary
to analyse their smoothness with respect to $h$ before
{investigating the analytic property} of the numerical solution
$u(\tau)$. First of all, it can be observed from \eqref{explicit}
that $P_{\tau,\sigma}=P_{\tau,\sigma}(h)$ is not defined at $h=0$
since the matrix $M$ is singular {in this case.} Fortunately, the
following lemma shows that { the singularity is removable}.
\begin{lem}\label{smooth}
When $h$ tends to 0, the limit of $P_{\tau,\sigma}$ there exists.
What's more, $P_{\tau,\sigma}$ can be smoothly extended to $h=0$ by
setting $P_{\tau,\sigma}(0)=\lim_{h\to0}P_{\tau,\sigma}(h)$.
\end{lem}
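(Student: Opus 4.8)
The plan is to desingularise the formula \eqref{explicit} by combining a judicious choice of basis of $Y$ with a diagonal rescaling that exactly cancels the powers of $h$ responsible for the blow-up of $M^{-1}$. Since, as already noted, $P_{\tau,\sigma}$ does not depend on which basis of $Y_h$ is used, I am free to pick the most convenient one.

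First I would exploit the translation invariance in \eqref{affine}. A finite-dimensional space of smooth functions that is invariant under all translations is invariant under $d/dt$, so, by Cayley--Hamilton applied to the operator $d/dt$ on the $r$-dimensional space $Y$, every element of $Y$ satisfies one and the same monic constant-coefficient linear ODE of order $r$; in other words $Y$ is the solution space of such an ODE. By Abel's identity the Wronskian of any basis of $Y$ is nowhere zero, so the ``Taylor matrix'' $\bigl(\varphi_j^{(k)}(t_0)\bigr)_{0\le j,k\le r-1}$ is invertible, and $Y$ admits a basis $\{\varphi_i\}_{i=0}^{r-1}$ with $\varphi_i^{(k)}(t_0)=\delta_{ik}$ for $0\le k\le r-1$. (Even without Abel, the weaker fact that $Y$ has a basis whose elements vanish to pairwise distinct orders at $t_0$ — which is automatic — would suffice for what follows.) For such a basis, Taylor's formula with integral remainder, evaluated at $t=t_0+\tau h$, gives
\begin{equation*}
\tilde\varphi_i(\tau)=h^i\Bigl(\tfrac{\tau^i}{i!}+h^{r-i}g_i(\tau;h)\Bigr),\qquad
g_i(\tau;h)=\frac{\tau^r}{(r-1)!}\int_0^1(1-v)^{r-1}\varphi_i^{(r)}(t_0+\tau h v)\,dv,
\end{equation*}
where $r-i\ge 1$ and $g_i$ is jointly smooth in $(\tau,h)$ (here the ``sufficiently smooth'' hypothesis on the $\varphi_i$ is consumed: $r$ derivatives enter the remainder).

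Next I would factor the stiffness matrix accordingly. Set $\Phi(\tau;h)=(\tilde\varphi_0,\dots,\tilde\varphi_{r-1})^\intercal$, $p(\tau;h)=(p_0,\dots,p_{r-1})^\intercal$ with $p_i(\tau;h):=\tau^i/i!+h^{r-i}g_i(\tau;h)$, and $D(h):=\mathrm{diag}(1,h,\dots,h^{r-1})$, so that $\Phi=D(h)\,p$. Then
\begin{equation*}
M=\int_0^1\Phi\Phi^\intercal\,d\tau=D(h)\,\widetilde M(h)\,D(h),\qquad
\widetilde M(h):=\int_0^1 p(\tau;h)\,p(\tau;h)^\intercal\,d\tau .
\end{equation*}
The matrix $\widetilde M(h)$ is smooth in $h$ near $0$ (differentiate under the integral sign), and $\widetilde M(0)$ is, up to the harmless factorials, the Gram matrix of $1,\tau,\dots,\tau^{r-1}$ on $[0,1]$, hence positive definite. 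By continuity $\det\widetilde M(h)\neq 0$ on a neighbourhood of $h=0$, so $\widetilde M(h)^{-1}$ is smooth there by Cramer's rule. For $h\neq 0$ the singular factors cancel:
\begin{equation*}
P_{\tau,\sigma}=\Phi(\tau)^\intercal M^{-1}\Phi(\sigma)
=p(\tau;h)^\intercal D(h)\,D(h)^{-1}\widetilde M(h)^{-1}D(h)^{-1}\,D(h)\,p(\sigma;h)
=p(\tau;h)^\intercal\,\widetilde M(h)^{-1}\,p(\sigma;h),
\end{equation*}
and the right-hand side is smooth in $(\tau,\sigma,h)$ on a full neighbourhood of $h=0$. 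Hence $\lim_{h\to 0}P_{\tau,\sigma}=p(\tau;0)^\intercal\widetilde M(0)^{-1}p(\sigma;0)$ exists, and declaring this to be $P_{\tau,\sigma}(0)$ yields the asserted smooth extension; as a consistency check, this limit is the reproducing kernel of $\mathrm{span}\{1,\tau,\dots,\tau^{r-1}\}$, in agreement with \eqref{Legendre}.

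The crux — and the only non-routine point — is the first step: the cancellation of $D(h)^{-1}$ hinges on being able to write each $\tilde\varphi_i$ as a term of exact order $h^{i}$ (more precisely, of order $h^{m_i}$ with the $m_i$ distinct) times a unit, plus a strictly smaller smooth remainder, and this separation of scales is precisely what \eqref{affine} guarantees. Once the adapted basis and the scaling $D(h)$ are in place, everything else is the standard continuity-plus-Cramer argument recorded above.
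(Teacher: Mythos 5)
Your proof is correct, and at its core it is the same desingularization as the paper's: pass to a basis of $Y_h$ adapted to the Taylor expansion at $t_0$, rescale by $\mathrm{diag}(1,h,\dots,h^{r-1})$ so that the basis tends to the monomial basis $\{1,\tau,\dots,\tau^{r-1}\}$, and use the basis-independence of $P_{\tau,\sigma}$. The two executions diverge only in the last step: the paper orthonormalizes the rescaled basis by Gram--Schmidt and reads the limit off $P_{\tau,\sigma}=\sum_i\phi_i(\tau)\phi_i(\sigma)$ (implicitly using that Gram--Schmidt applied to a smoothly $h$-dependent frame produces smoothly $h$-dependent $\phi_i$), whereas you factor the stiffness matrix as $M=D(h)\widetilde M(h)D(h)$ and cancel the diagonal factors directly in \eqref{explicit}, which makes the smoothness at $h=0$ a one-line consequence of Cramer's rule and the positive definiteness of the limiting Gram matrix. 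Your version also supplies a justification the paper omits: the invertibility of the Wronskian $W$ in \eqref{Wronskian} is asserted there without proof (and does not follow from linear independence alone for merely smooth functions), while your Cayley--Hamilton/Abel argument derives it from the translation invariance \eqref{affine}; likewise your integral-remainder form of the expansion makes explicit that the $\mathcal{O}(h^{r})$ terms in \eqref{expand1} are genuinely smooth in $h$. The only blemish is the parenthetical claim that a basis with pairwise distinct (finite) vanishing orders at $t_0$ is ``automatic'': for a general space of smooth functions this can fail (flat functions), and it is again \eqref{affine} that rules this out; since you do not rely on that remark, it does not affect the proof.
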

\begin{proof}
By expanding $\{\varphi_{i}(t_{0}+\tau h)\}_{i=0}^{r-1}$ at $t_{0}$, we obtain that
\begin{equation}\label{expand1}
(\tilde{\varphi}_{0}(\tau),\ldots,\tilde{\varphi}_{r-1}(\tau))=(1,\tau h,\ldots,\frac{\tau^{r-1}h^{r-1}}{(r-1)!})W+\mathcal{O}(h^{r}),
\end{equation}
{where
\begin{equation}\label{Wronskian}
W=\left(\begin{array}{cccc}\varphi_{0}(t_{0})&\varphi_{1}(t_{0})&\cdots&\varphi_{r-1}(t_{0})\\
\varphi_{0}^{(1)}(t_{0})&\varphi_{1}^{(1)}(t_{0})&\cdots&\varphi_{r-1}^{(1)}(t_{0})\\ \vdots&\vdots& &\vdots\\
\varphi_{0}^{(r-1)}(t_{0})&\varphi_{1}^{(r-1)}(t_{0})&\cdots&\varphi_{r-1}^{(r-1)}(t_{0})\end{array}\right)
\end{equation}}
is the Wronskian of $\{\varphi_{i}(t)\}_{i=0}^{r-1}$ at $t_{0}$,
which is nonsingular. {Post-multiplying the right-hand side of
\eqref{expand1} by} $W^{-1}diag(1,h^{-1},\ldots,h^{1-r}(r-1)!)$
yields another basis of $X_{h}$:
\begin{equation*}
\{1+\mathcal{O}(h),\tau+\mathcal{O}(h),\ldots,\tau^{r-1}+\mathcal{O}(h)\}.
\end{equation*}
Applying the Gram-Schmidt process (with respect to the inner product
$\langle\cdot,\cdot\rangle$) to the above basis, we obtain an
orthonormal basis
$\left\{\phi_{i}(\tau)=\hat{p}_{i}(\tau)+\mathcal{O}(h)\right\}_{i=0}^{r-1}$.
Thus by \eqref{PCOEFF} and defining
\begin{equation}\label{Plimit}
P_{\tau,\sigma}(0)=\lim_{h\to0}\sum_{i=0}^{r-1}\phi_{i}(\tau)\phi_{i}(\sigma)
=\sum_{i=0}^{r-1}\hat{p}_{i}(\tau)\hat{p}_{i}(\sigma),
\end{equation}
$P_{\tau,\sigma}$ is
extended to $h=0$. Since each
$\phi_{i}(\tau)=\hat{p}_{i}(\tau)+\mathcal{O}(h)$ is smooth with respect
to $h$, $P_{\tau,\sigma}$ is also a smooth function of $h$.
\end{proof}\qed

{From \eqref{Legendre} and \eqref{Plimit}, it can be observed that
the FFCFE$r$ method \eqref{CRK} reduces to the CFE$r$ method when
$h\to0$,} or equivalently, the energy-preserving collocation method
of order $2r$ and HBVM$(\infty,r)$ method mentioned above. Since
$A_{\tau,\sigma}=\int_{0}^{\tau}P_{\alpha,\sigma}d\alpha$ is also a
smooth function of $h$, we can assume that
\begin{equation}\label{cond1}
M_{k}=\max_{\tau,\sigma,h\in[0,1]}\left|\frac{\partial^{k}A_{\tau,\sigma}}{\partial h^{k}}\right|,\quad k=0,1,\ldots.
\end{equation}
Furthermore, {since the right function $f$ in \eqref{IVP} maps from
$\mathbb{R}^{d}$ to $\mathbb{R}^{d}$, the $n$th-order derivative of
$f$ at $y$ denoted by $f^{(n)}(y)$} is a multilinear map from
$\underbrace{\mathbb{R}^{d}\times\ldots\times\mathbb{R}^{d}}_{n-fold}$
to $\mathbb{R}^{d}$ defined by {\begin{equation*}
f^{(n)}(y)(z_{1},\ldots,z_{n})=\sum_{1\leq\alpha_{1},\ldots,\alpha_{n}\leq
d}\frac{\partial^{n}f}{\partial y_{\alpha_{1}}\cdots\partial
y_{\alpha_{n}}}(y)z_{1}^{\alpha_{1}}\ldots z_{n}^{\alpha_{n}},
\end{equation*}
where $y=(y_{1},\ldots,y_{d})^{\intercal}$} and
$z_{i}=(z_{i}^{1},\ldots,z_{i}^{d})^{\intercal}$, $i=1,\ldots,n$.
With this background, we now  can give the existence, uniqueness,
especially the smoothness with respect to $h$ for the continuous
finite element {approximation} $u(\tau)$ associated with the
FFCFE$r$ method. The proof of the following theorem is based on a
fixed-point iteration which is analogous to Picard iteration.
\begin{mytheo}\label{eus}
{Given a positive constant $R$,} let
$$B(y_{0},R)=\left\{y\in\mathbb{R}^{d} : ||y-y_{0}||\leq R\right\}$$ and
\begin{equation}\label{cond}
D_{n}=\max_{y\in B(y_{0},R)}||f^{(n)}(y)||,\quad n=0,1,\ldots,
\end{equation}
where $||\cdot||=||\cdot||_{\infty}$ is the maximum norm for vectors
in $\mathbb{R}^{d}$ or the corresponding induced norm for the
multilinear maps $f^{(n)}(y), n\geq1$. Then the {FFCFE$r$} method
\eqref{CFE2} or \eqref{CRK} has a unique solution $u(\tau)$ which is
{smoothly} dependent of $h$ provided {\begin{equation}\label{cond2}
0\leq
h\leq\varepsilon<\min\left\{\frac{1}{M_{0}D_{1}},\frac{R}{M_{0}D_{0}},1\right\}.
\end{equation}}
\end{mytheo}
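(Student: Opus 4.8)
The plan is to set up a fixed-point iteration on a suitable complete metric space and apply the Banach fixed-point theorem, exactly as in a Picard-type argument. Define the map $\mathcal{T}$ on continuous functions $u : [0,1] \to \mathbb{R}^d$ by
\[
(\mathcal{T}u)(\tau) = y_0 + h\int_0^1 A_{\tau,\sigma} f(u(\sigma))\,d\sigma,
\]
so that a fixed point of $\mathcal{T}$ is precisely a solution of \eqref{CRK}. I would work in the closed ball $\mathcal{B} = \{u \in C([0,1],\mathbb{R}^d) : \|u(\tau) - y_0\| \le R \text{ for all } \tau\}$ equipped with the sup-norm, which is a complete metric space.

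First I would check that $\mathcal{T}$ maps $\mathcal{B}$ into itself: for $u \in \mathcal{B}$ we have $\|f(u(\sigma))\| \le D_0$, hence $\|(\mathcal{T}u)(\tau) - y_0\| \le h M_0 D_0 \le \varepsilon M_0 D_0 < R$ by \eqref{cond2}, using the bound $M_0 = \max|A_{\tau,\sigma}|$ from \eqref{cond1}. Second I would show $\mathcal{T}$ is a contraction: for $u, \tilde{u} \in \mathcal{B}$, the mean value inequality gives $\|f(u(\sigma)) - f(\tilde u(\sigma))\| \le D_1 \|u(\sigma) - \tilde u(\sigma)\|$ since the segment joining $u(\sigma)$ and $\tilde u(\sigma)$ lies in $B(y_0,R)$, so $\|\mathcal{T}u - \mathcal{T}\tilde u\|_\infty \le h M_0 D_1 \|u - \tilde u\|_\infty \le \varepsilon M_0 D_1 \|u-\tilde u\|_\infty$, and $\varepsilon M_0 D_1 < 1$ by \eqref{cond2}. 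Banach's theorem then yields a unique fixed point $u = u(\cdot\,;h) \in \mathcal{B}$, giving existence and uniqueness of the solution to \eqref{CRK}; uniqueness within all of $C([0,1],\mathbb{R}^d)$ (not just $\mathcal{B}$) follows because any solution automatically satisfies the a priori bound $\|u - y_0\| \le h M_0 D_0 < R$ and hence lies in $\mathcal{B}$.

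For the smooth dependence on $h$, the cleanest route is the uniform contraction principle (parametrized fixed-point theorem): since $A_{\tau,\sigma} = A_{\tau,\sigma}(h)$ is $C^\infty$ in $h$ on $[0,1]$ by Lemma \ref{smooth}, the map $(h,u) \mapsto \mathcal{T}_h u$ is $C^\infty$ jointly (the integral of a smooth integrand depending smoothly on parameters), and the contraction constant is uniform in $h \in [0,\varepsilon]$, so the fixed point $u(\cdot\,;h)$ depends smoothly — in fact $C^\infty$ — on $h$. Alternatively, one can differentiate the fixed-point equation formally in $h$ to obtain that each derivative $\partial_h^k u$ satisfies a linear integral equation of the same contractive type (the operator $v \mapsto h\int_0^1 A_{\tau,\sigma}(h) f'(u(\sigma))v(\sigma)\,d\sigma$ has norm $\le \varepsilon M_0 D_1 < 1$), whose right-hand side involves only lower-order derivatives, together with the quantities $M_k$ from \eqref{cond1} and $D_n$ from \eqref{cond}; inductively each such equation has a unique continuous solution, and one verifies a posteriori that these are genuinely the derivatives of $u$ by the usual difference-quotient estimate.

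The main obstacle is the smoothness-in-$h$ claim rather than existence/uniqueness, which is routine. The delicate point is that $A_{\tau,\sigma}(h)$ is only defined as a removable-singularity extension at $h = 0$ — Lemma \ref{smooth} guarantees it is smooth there, but one must be careful to invoke that lemma (and its consequence that $M_k < \infty$ for all $k$) rather than naively differentiating the formula \eqref{Aexplicit} involving $M^{-1}$, which blows up at $h=0$. Handling the endpoint $h = 0$ correctly — so that the method genuinely reduces to CFE$r$ there with full smoothness, as asserted after \eqref{Plimit} — is where the argument needs the most care; away from $h = 0$ everything is classical.
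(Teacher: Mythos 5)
Your proposal is correct, and for the existence and uniqueness part it is essentially the paper's argument in different clothing: the paper runs the explicit Picard iteration $u_{n+1}(\tau)=y_{0}+h\int_{0}^{1}A_{\tau,\sigma}f(u_{n}(\sigma))d\sigma$ starting from $u_{0}\equiv y_{0}$ and sums the telescoping series via the Weierstrass $M$-test, which is exactly the proof of the Banach fixed-point theorem you invoke; the self-map and contraction estimates ($\varepsilon M_{0}D_{0}<R$ and $\beta=\varepsilon M_{0}D_{1}<1$) are identical. Where you genuinely diverge is the smoothness in $h$. The paper stays with the iterates: it differentiates the recursion to get $\|\partial_{h}u_{n+1}\|_{c}\le\alpha+\beta\|\partial_{h}u_{n}\|_{c}$, deduces the uniform bound $C^{*}=\alpha/(1-\beta)$, and then shows $\|\partial_{h}u_{n+1}-\partial_{h}u_{n}\|_{c}\le n\gamma\beta^{n-1}+\beta^{n}C^{*}$, so the derivative series converges uniformly and the limit function is $C^{1}$ (and similarly for higher $k$) — a fully self-contained but somewhat laborious calculation. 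You instead cite the uniform contraction principle, which buys a shorter proof at the cost of an external theorem (and of checking that $u\mapsto f\circ u$ is a smooth Nemytskii operator on $C([0,1],\mathbb{R}^{d})$); your alternative of differentiating the fixed-point equation for the limit $u$ itself and solving the resulting linear integral equations inductively is also sound and closer in spirit to the paper. Both of your routes correctly lean on Lemma \ref{smooth} for the finiteness of the $M_{k}$, as you note. One small caveat: your claim that uniqueness extends to all of $C([0,1],\mathbb{R}^{d})$ via the a priori bound $\|u-y_{0}\|\le hM_{0}D_{0}$ is mildly circular, since that bound already uses $\|f(u(\sigma))\|\le D_{0}$, valid only for $u(\sigma)\in B(y_{0},R)$; the paper's uniqueness step has the same implicit restriction to solutions staying in the ball, so this does not put you behind the paper, but it is worth flagging.
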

\begin{proof}
Set $u_{0}(\tau)\equiv y_{0}$. We construct a function
series $\{u_{n}(\tau)\}_{n=0}^{\infty}$ defined by the relation
\begin{equation}\label{recur}
u_{n+1}(\tau)=y_{0}+h\int_{0}^{1}A_{\tau,\sigma}f(u_{n}(\sigma))d\sigma,{\quad
n=0,1,\ldots.}
\end{equation}
Obviously, $\lim_{n\to\infty}u_{n}(\tau)$ is a solution to
\eqref{CRK} provided $\left\{u_{n}(\tau)\right\}_{n=0}^{\infty}$ is uniformly
convergent. Thus we need only to prove the uniform
convergence of the infinite series
$$\sum_{n=0}^{\infty}(u_{n+1}(\tau)-u_{n}(\tau)).$$

It follows from \eqref{cond1}, \eqref{cond2}, \eqref{recur} and
induction that
\begin{equation}\label{cond3}
||u_{n}(\tau)-y_{0}||\leq R,\quad n=0,1,\ldots
\end{equation}
Then {by} using \eqref{cond}, \eqref{cond2}, \eqref{recur},
\eqref{cond3} and the inequalities
\begin{equation*}
\begin{aligned}
&||\int_{0}^{1}w(\tau)d\tau||\leq\int_{0}^{1}||w(\tau)||d\tau,\quad\text{for $\mathbb{R}^{d}$-valued function $w(\tau)$},\\
&{||f(y)-f(z)||\leq D_{1}||y-z||,\quad\text{for } y,z\in B(y_{0},R),}\\
\end{aligned}
\end{equation*}
we obtain the
following inequalities
\begin{equation*}
\begin{aligned}
&||u_{n+1}(\tau)-u_{n}(\tau)||\leq h\int_{0}^{1}M_{0}D_{1}||u_{n}(\sigma)-u_{n-1}(\sigma)||d\sigma\\
&\leq\beta||u_{n}-u_{n-1}||_{c},\quad{\beta=\varepsilon M_{0}D_{1},}\\
\end{aligned}
\end{equation*}
where $||\cdot||_{c}$ is the maximum norm for continuous functions:
\begin{equation*}
||w||_{c}=\max_{\tau\in[0,1]}||w(\tau)||,\quad\text{$w$ is a continuous $\mathbb{R}^{d}$-valued function on $[0,1]$}.
\end{equation*}
Thus, we have
\begin{equation*}
||u_{n+1}-u_{n}||_{c}\leq\beta||u_{n}-u_{n-1}||_{c}
\end{equation*}
and
\begin{equation}\label{recur3}
||u_{n+1}-u_{n}||_{c}\leq\beta^{n}||u_{1}-y_{0}||_{c}\leq\beta^{n}R,\quad n=0,1,\ldots.
\end{equation}
Since $\beta<1$, according to Weierstrass $M$-test,
$\sum_{n=0}^{\infty}(u_{n+1}(\tau)-u_{n}(\tau))$ is uniformly
convergent, and thus, the limit of $\{u_{n}(\tau)\}_{n=0}^{\infty}$
is a solution to \eqref{CRK}. If $v(\tau)$ is another solution, then
the difference {between} $u(\tau)$ and $v(\tau)$ satisfies
{\begin{equation*} ||u(\tau)-v(\tau)||\leq
h\int_{0}^{1}||A_{\tau,\sigma}(f(u(\sigma))-f(v(\sigma)))||d\sigma\leq\beta||u-v||_{c},
\end{equation*}
and
\begin{equation*}
||u-v||_{c}\leq\beta||u-v||_{c}.
\end{equation*}}
This means $||u-v||_{c}=0,$ i.e., $u(\tau)\equiv v(\tau)$. Hence the
existence and uniqueness have been proved.

As for the {smooth} dependence of $u$ on $h$, since every
$u_{n}(\tau)$ is a {smooth} function of $h$, we need only to prove
the series
$$\left\{\frac{\partial^{k}u_{n}}{\partial h^{k}}(\tau)\right\}_{n=0}^{\infty}$$
are uniformly convergent for {$k\geq1$.} Firstly, differentiating
both sides of \eqref{recur} with respect to $h$ yields
\begin{equation}\label{recur2}
\frac{\partial u_{n+1}}{\partial h}(\tau)=\int_{0}^{1}(A_{\tau,\sigma}+h\frac{\partial A_{\tau,\sigma}}{\partial h})f(u_{n}(\sigma))d\sigma
+h\int_{0}^{1}A_{\tau,\sigma}f^{(1)}(u_{n}(\sigma))\frac{\partial u_{n}}{\partial h}(\sigma)d\sigma.
\end{equation}
We then have
\begin{equation}
||\frac{\partial u_{n+1}}{\partial h}||_{c}\leq\alpha+\beta||\frac{\partial u_{n}}{\partial h}||_{c},\quad
\alpha=(M_{0}+\varepsilon M_{1})D_{0}.
\end{equation}
By induction, it is easy to show that $\left\{\frac{\partial
u_{n}}{\partial h}(\tau)\right\}_{n=0}^{\infty}$ is uniformly
bounded:
\begin{equation}\label{bound}
||\frac{\partial u_{n}}{\partial
h}||_{c}\leq\alpha(1+\beta+\ldots+\beta^{n-1})\leq\frac{\alpha}{1-\beta}=C^{*},\quad
n=0,1,\ldots.
\end{equation}
Combining \eqref{recur3}, \eqref{recur2} and \eqref{bound}, we
obtain
\begin{equation*}
\begin{aligned}
&||\frac{\partial u_{n+1}}{\partial h}-\frac{\partial u_{n}}{\partial h}||_{c}\\
&\leq\int_{0}^{1}(M_{0}+hM_{1})||f(u_{n}(\sigma))-f(u_{n-1}(\sigma))||d\sigma\\
&+h\int_{0}^{1}M_{0}\left(||(f^{(1)}(u_{n}(\sigma))-f^{(1)}(u_{n-1}(\sigma)))\frac{\partial u_{n}}{\partial h}(\sigma)||
+||f^{(1)}(u_{n-1}(\sigma))(\frac{\partial u_{n}}{\partial h}(\sigma)-\frac{\partial u_{n-1}}{\partial h}(\sigma))||\right)d\sigma\\
&\leq\gamma\beta^{n-1}+\beta||\frac{\partial u_{n}}{\partial h}-\frac{\partial u_{n-1}}{\partial h}||_{c},\\
\end{aligned}
\end{equation*}
where
$$\gamma=(M_{0}D_{1}+\varepsilon M_{1}D_{1}+\varepsilon M_{0}L_{2}C^{*})R,$$
{and $L_{2}$ is a constant satisfying
\begin{equation*}
||f^{(1)}(y)-f^{(1)}(z)||\leq L_{2}||y-z||,\quad\text{for $y,z\in B(y_{0},R)$}.
\end{equation*}}
Thus again by induction, we have
\begin{equation*}
||\frac{\partial u_{n+1}}{\partial h}-\frac{\partial u_{n}}{\partial h}||_{c}\leq n\gamma\beta^{n-1}+\beta^{n}C^{*},\quad n=1,2,\ldots
\end{equation*}
and $\left\{\frac{\partial u_{n}}{\partial
h}(\tau)\right\}_{n=0}^{\infty}$ is uniformly convergent. By a
similar argument, one can show that other function series
$\left\{\frac{\partial^{k}u_{n}}{\partial
h^{k}}(\tau)\right\}_{n=0}^{\infty}$ for $k\geq2$ are uniformly
convergent as well. Therefore, $u(\tau)$ is {smoothly dependent} on
$h$. The proof is complete.
\end{proof}\qed

Since our analysis on the algebraic order of the {FFCFE$r$} method
is mainly based on Taylor's theorem, it is meaningful to investigate
the expansion of $P_{\tau,\sigma}$.
\begin{prop}\label{prop1}
Assume that the Taylor expansion of $P_{\tau,\sigma}$ with respect
to $h$ at zero is
\begin{equation}\label{Pexpand}
P_{\tau,\sigma}=\sum_{n=0}^{r-1}P_{\tau,\sigma}^{[n]}h^{n}+\mathcal{O}(h^{r}).
\end{equation}
Then the coefficients $P_{\tau,\sigma}^{[n]}$ satisfy
\begin{equation*}
\langle P_{\tau,\sigma}^{[n]},g_{m}(\sigma)\rangle_{\sigma}=
\left\{\begin{aligned}
&g_{m}(\tau) ,{\quad n=0,\quad m=r-1,}\\
&0,\quad{n=1,\ldots,r-1,\quad m=r-1-n,}\\
\end{aligned}\right.
\end{equation*}
for any $g_{m}\in P_{m}([0,1]),$ {where} $P_{m}([0,1])$ consists of
polynomials of degrees $\leq m$ on $[0,1]$.
\end{prop}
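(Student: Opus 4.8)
The plan is to obtain both families of identities simultaneously by comparing two expansions of $\langle P_{\tau,\sigma},g(\sigma)\rangle_{\sigma}$ in powers of $h$, for $g$ a fixed polynomial: the Taylor expansion \eqref{Pexpand} on the one hand, and, on the other, an expansion coming from the fact that $P_{\tau,\sigma}$ is the reproducing kernel of the space $Y_{h}$ onto which $P_{h}$ projects. The one ingredient I need beyond what is already in the excerpt is a sharpened form of the estimate used in the proof of Lemma \ref{smooth}: when the right-hand side of \eqref{expand1} is post-multiplied by $W^{-1}\mathrm{diag}(1,h^{-1},\dots,(r-1)!\,h^{1-r})$, the $\mathcal{O}(h^{r})$ remainder in column $j$ gets scaled by a factor of order $h^{-j}$, so the resulting basis of $Y_{h}$ has the form $\{\psi_{j}\}_{j=0}^{r-1}$ with $\psi_{j}(\tau)=\tau^{j}+e_{j}(\tau)$ where $\|e_{j}\|_{\infty}=\mathcal{O}(h^{r-j})$ uniformly on $[0,1]$ and each $\psi_{j}$ is smooth in $h$ at $h=0$. (In words: $Y_{h}$ approximates $P_{r-1}([0,1])$ with this graded accuracy.)

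I would then invoke two elementary facts about $P_{\tau,\sigma}$. First, the reproducing property $\langle P_{\tau,\sigma},v(\sigma)\rangle_{\sigma}=v(\tau)$ for every $v\in Y_{h}$, which is immediate from \eqref{DEF} since $P_{h}v=v$ when $v\in Y_{h}$. Second, the uniform bound $\sup_{\tau\in[0,1]}\int_{0}^{1}P_{\tau,\sigma}^{2}\,d\sigma\le C$ for all small $h$: by \eqref{PCOEFF} and orthonormality this integral equals $\sum_{i=0}^{r-1}\phi_{i}(\tau)^{2}$, which is bounded because $\phi_{i}(\tau)=\hat{p}_{i}(\tau)+\mathcal{O}(h)$ (alternatively, use the smooth extension of $P_{\tau,\sigma}$ to $h=0$ furnished by Lemma \ref{smooth}). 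By Cauchy--Schwarz this gives $|\langle P_{\tau,\sigma},w(\sigma)\rangle_{\sigma}|\le\sqrt{C}\,\|w\|_{L^{2}}\le\sqrt{C}\,\|w\|_{\infty}$, uniformly in $\tau$ and $h$. Applying the reproducing property to $v=\psi_{j}$ and estimating the remainder by the second fact yields, for $j=0,1,\dots,r-1$,
\begin{equation*}
\langle P_{\tau,\sigma},\sigma^{j}\rangle_{\sigma}=\psi_{j}(\tau)-\langle P_{\tau,\sigma},e_{j}(\sigma)\rangle_{\sigma}=\tau^{j}+\mathcal{O}(h^{r-j}),
\end{equation*}
uniformly for $\tau\in[0,1]$; by linearity, for any $g_{m}\in P_{m}([0,1])$ with $m\le r-1$,
\begin{equation*}
\langle P_{\tau,\sigma},g_{m}(\sigma)\rangle_{\sigma}=g_{m}(\tau)+\mathcal{O}(h^{r-m}).
\end{equation*}

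Finally, $h\mapsto\langle P_{\tau,\sigma},g_{m}(\sigma)\rangle_{\sigma}$ is smooth in $h$ (Lemma \ref{smooth}), and by \eqref{Pexpand} its Taylor coefficients at $h=0$ of orders $0,\dots,r-1$ are precisely $\langle P_{\tau,\sigma}^{[n]},g_{m}(\sigma)\rangle_{\sigma}$. Matching these against the coefficients of $g_{m}(\tau)+\mathcal{O}(h^{r-m})$ forces $\langle P_{\tau,\sigma}^{[0]},g_{m}\rangle_{\sigma}=g_{m}(\tau)$ for every $m\le r-1$ (in particular for $m=r-1$), and $\langle P_{\tau,\sigma}^{[n]},g_{m}\rangle_{\sigma}=0$ for $1\le n\le r-1-m$; taking $m=r-1-n$ in the latter covers $n=1,\dots,r-1$, which is the assertion. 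The only step demanding genuine care is the graded estimate $\psi_{j}=\tau^{j}+\mathcal{O}(h^{r-j})$: one must keep track of the powers of $h$ introduced by the diagonal scaling in the proof of Lemma \ref{smooth}, and check that the remainders, once divided by $h^{j}$, still extend smoothly across $h=0$ (they do, since they vanish to order $r\ge j$ there). Everything after that is routine manipulation of the reproducing kernel together with Taylor expansions legitimized by Lemma \ref{smooth}.
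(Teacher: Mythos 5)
Your proof is correct, and it reaches the result by a route that is recognizably related to, but organized quite differently from, the paper's. The paper also starts from the reproducing identity $\langle P_{\tau,\sigma},\tilde{\varphi}_{i}(\sigma)\rangle_{\sigma}=\tilde{\varphi}_{i}(\tau)$, but it stays with the original basis: it substitutes the two Taylor expansions, collects powers of $h$, and packages the unknown moments $P_{mn}=\langle P_{\tau,\sigma}^{[n]},\sigma^{m}\rangle_{\sigma}$ into an upper-triangular matrix $V$ satisfying $W^{\intercal}V=0$, whence $V=0$ by nonsingularity of the Wronskian. You instead perform the change of basis by $W^{-1}\mathrm{diag}(1,h^{-1},\ldots,(r-1)!\,h^{1-r})$ at the level of functions --- the same transformation used in the proof of Lemma \ref{smooth} --- to produce the graded near-monomial basis $\psi_{j}=\tau^{j}+\mathcal{O}(h^{r-j})$ of $Y_{h}$, apply the reproducing property to $\psi_{j}$, and then need one analytic ingredient the paper does not: the uniform bound $\sup_{\tau}\int_{0}^{1}P_{\tau,\sigma}^{2}\,d\sigma\le C$, correctly extracted from \eqref{PCOEFF}, to push the $\mathcal{O}(h^{r-j})$ remainder through the inner product before Taylor-matching. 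Both arguments rest on the same two pillars (the reproducing property and the invertibility of $W$); yours makes the graded approximation of $P_{r-1}([0,1])$ by $Y_{h}$ explicit, explains transparently why the vanishing occurs exactly for $m+n\le r-1$, and yields as a by-product the slightly stronger uniform statement $\langle P_{\tau,\sigma},g_{m}(\sigma)\rangle_{\sigma}=g_{m}(\tau)+\mathcal{O}(h^{r-m})$, whereas the paper's version avoids the kernel bound entirely by keeping everything as an exact triangular linear system. The delicate points you flag --- the $h^{-j}$ scaling of the remainder columns, their smooth extension across $h=0$ (they vanish to order $r>j$ there), and the legitimacy of reading off Taylor coefficients from a smooth function that is $\mathcal{O}(h^{k})$ --- are all handled correctly.
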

\begin{proof}
It can be observed from \eqref{explicit} that
\begin{equation}\label{equa}
\langle P_{\tau,\sigma},\varphi_{i}(t_{0}+\sigma h)\rangle_{\sigma}=\varphi_{i}(t_{0}+\tau h),\quad i=0,1,\ldots,r-1.
\end{equation}
Meanwhile, expanding $\varphi_{i}(t_{0}+\tau
h)$ at $t_{0}$ yields
\begin{equation}\label{expand}
\varphi_{i}(t_{0}+\tau h)=\sum_{n=0}^{r-1}\frac{\varphi_{i}^{(n)}(t_{0})}{n!}\tau^{n}h^{n}+\mathcal{O}(h^{r}).
\end{equation}
Then by inserting \eqref{Pexpand} and \eqref{expand} into the equation \eqref{equa}, we
obtain that
\begin{equation*}
\langle\sum_{n=0}^{r-1}P_{\tau,\sigma}^{[n]}h^{n},\sum_{m=0}^{r-1}\frac{\varphi_{i}^{(m)}(t_{0})}{m!}\sigma^{m}h^{m}\rangle_{\sigma}
=\sum_{k=0}^{r-1}\frac{\varphi_{i}^{(k)}(t_{0})}{k!}\tau^{k}h^{k}+\mathcal{O}(h^{r}).
\end{equation*}
Collecting the terms by $h^{k}$ leads to
\begin{equation*}
\begin{aligned}
&\sum_{k=0}^{r-1}(\sum_{m+n=k}\frac{\varphi_{i}^{(m)}(t_{0})}{m!}
\langle P_{\tau,\sigma}^{[n]},\sigma^{m}\rangle_{\sigma}-\frac{\varphi_{i}^{(k)}(t_{0})}{k!}\tau^{k})h^{k}=\mathcal{O}(h^{r}),\\
&\sum_{m=0}^{k-1}\frac{\varphi_{i}^{(m)}(t_{0})}{m!}
P_{m,k-m}+\frac{\varphi_{i}^{(k)}(t_{0})}{k!}(P_{k0}-\tau^{k})=0,\quad i,\ k=0,1,\ldots,r-1,\\
\end{aligned}
\end{equation*}
and
\begin{equation*}
W^{\intercal} V=0,
\end{equation*}
where $P_{mn}=\langle
P_{\tau,\sigma}^{[n]},\sigma^{m}\rangle_{\sigma},\ W$ is the
Wronskian \eqref{Wronskian}, and $V=(V_{mk})_{0\leq m,k\leq r-1}$ is
an upper triangular matrix with the entries determined
by
\begin{equation*}
V_{mk}=\left\{\begin{aligned}
&\frac{1}{m!}P_{m,k-m},\quad m<k,\\
&\frac{1}{m!}(P_{m,0}-\tau^{m}),\quad m=k.\\
\end{aligned}\right.
\end{equation*}
Since $W$ is nonsingular, $V=0$,
\begin{equation}\label{result}
P_{mn}=\left\{\begin{aligned}
&\tau^{m},\quad n=0,\quad m+n\leq r-1,\\
&0,\quad n=1, 2, \ldots, r-1,\quad m+n\leq r-1.\\
\end{aligned}\right.
\end{equation}
{Then the statement of this proposition directly follows from
\eqref{result}.}
\end{proof}\qed

Aside from $P_{\tau,\sigma}$, it is also crucial to analyse the
expansion of the solution $u(\tau)$. For convenience, we say that an
$h$-dependent function $w(\tau)$ is regular if it can be expanded as
\begin{equation*}
w(\tau)=\sum_{n=0}^{r-1}w^{[n]}(\tau)h^{n}+\mathcal{O}(h^{r}),
\end{equation*}
where
$$w^{[n]}(\tau)=\frac{1}{n!}\frac{\partial^{n}w(\tau)}{\partial h^{n}}|_{h=0}$$ is a vector-valued function
with polynomial entries of degrees $\leq n$.
\begin{lem}\label{lemma1}
Given a regular function $w$ and an $h$-independent {sufficiently
smooth} function $g$, the composition (if exists) is
regular. Moreover, {the difference between $w$ and its projection
satisfies}
\begin{equation*}
P_{h}w(\tau)-w(\tau)=\mathcal{O}(h^{r}).
\end{equation*}
\end{lem}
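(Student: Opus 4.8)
The plan is to establish the two assertions separately, in both cases reducing everything to Taylor expansion in $h$ and careful bookkeeping of the polynomial degrees in $\tau$.

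\emph{Regularity of the composition.} I would start from the regular expansion $w(\tau)=\sum_{n=0}^{r-1}w^{[n]}(\tau)h^{n}+\mathcal{O}(h^{r})$, note that $w^{[0]}$ is a \emph{constant} vector (its entries being polynomials of degree $\leq 0$), and set $\xi(\tau):=w(\tau)-w^{[0]}=\mathcal{O}(h)$. Since $w$ is regular and $g$ is smooth, $g\circ w$ admits a Taylor expansion in $h$ about $0$; expanding $g$ about the fixed point $w^{[0]}$ gives
\begin{equation*}
g(w(\tau))=\sum_{k=0}^{r-1}\frac{1}{k!}g^{(k)}(w^{[0]})(\underbrace{\xi(\tau),\ldots,\xi(\tau)}_{k})+\mathcal{O}(h^{r}),
\end{equation*}
because $g^{(k)}(w^{[0]})(\xi,\ldots,\xi)=\mathcal{O}(h^{k})$. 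Substituting $\xi(\tau)=\sum_{n\geq 1}w^{[n]}(\tau)h^{n}+\mathcal{O}(h^{r})$ and collecting powers of $h$, the coefficient of $h^{n}$ for $n\leq r-1$ becomes a finite linear combination of terms $g^{(k)}(w^{[0]})(w^{[n_{1}]}(\tau),\ldots,w^{[n_{k}]}(\tau))$ with each $n_{i}\geq 1$ and $n_{1}+\cdots+n_{k}=n$. Since $g^{(k)}(w^{[0]})$ is a constant multilinear map and $w^{[n_{i}]}$ has polynomial entries of degree $\leq n_{i}$, such a term has polynomial entries of degree $\leq n_{1}+\cdots+n_{k}=n$; hence $g\circ w$ is regular.

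\emph{The projection estimate.} Here I would use the representation $P_{h}w(\tau)=\langle P_{\tau,\sigma},w(\sigma)\rangle_{\sigma}$ coming from \eqref{DEF}, together with the expansion \eqref{Pexpand} of $P_{\tau,\sigma}$ and the regular expansion of $w$. Multiplying the two series and using that $P_{\tau,\sigma}$ is uniformly bounded on $[0,1]^{3}$ (Lemma \ref{smooth}) to control the remainders gives
\begin{equation*}
P_{h}w(\tau)=\sum_{k=0}^{r-1}h^{k}\sum_{n+m=k}\langle P_{\tau,\sigma}^{[n]},w^{[m]}(\sigma)\rangle_{\sigma}+\mathcal{O}(h^{r}).
\end{equation*}
For a term with $n+m=k\leq r-1$, $w^{[m]}$ has polynomial entries of degree $\leq m$ and $m+n\leq r-1$, so Proposition \ref{prop1} (applied entrywise) gives $\langle P_{\tau,\sigma}^{[n]},w^{[m]}(\sigma)\rangle_{\sigma}=0$ when $n\geq 1$ and $\langle P_{\tau,\sigma}^{[0]},w^{[k]}(\sigma)\rangle_{\sigma}=w^{[k]}(\tau)$ when $n=0$. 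Thus the inner sum collapses to $w^{[k]}(\tau)$ for every $k$, so $P_{h}w(\tau)=\sum_{k=0}^{r-1}w^{[k]}(\tau)h^{k}+\mathcal{O}(h^{r})=w(\tau)+\mathcal{O}(h^{r})$ by the regularity of $w$.

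The computations are routine; the delicate point is the degree accounting. In the composition one must check that the multilinear product $g^{(k)}(w^{[0]})(w^{[n_{1}]},\ldots,w^{[n_{k}]})$ genuinely lands in degree $\leq n_{1}+\cdots+n_{k}$ — which hinges on $w^{[0]}$ being constant, so that $g^{(k)}(w^{[0]})$ injects no $\tau$-dependence — and in the projection estimate one must verify that the index pair $(m,n)$, with $m$ bounding the degree of $w^{[m]}$, never leaves the range $m+n\leq r-1$ on which Proposition \ref{prop1} applies; this is exactly what forces every coefficient of $P_{h}w$ up to order $h^{r-1}$ to coincide with that of $w$.
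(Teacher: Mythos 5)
Your proof is correct and follows essentially the same route as the paper: the projection estimate is verbatim the paper's computation (multiply the expansions of $P_{\tau,\sigma}$ and $w$, then apply Proposition \ref{prop1} to annihilate the $n\geq 1$ terms and reproduce $w^{[k]}$ from the $n=0$ term), and your composition argument via Taylor expansion of $g$ about the constant $w^{[0]}$ is just a more explicit rendering of the paper's ``differentiate iteratively in $h$'' (Fa\`a di Bruno) argument. Your added emphasis that $w^{[0]}$ is constant---so $g^{(k)}(w^{[0]})$ injects no $\tau$-dependence---is exactly the point the paper leaves implicit.
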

\begin{proof}
Assume that the expansion of $g(w(\tau))$ with respect to $h$ at
zero is
\begin{equation*}
g(w(\tau))=\sum_{n=0}^{r-1}p^{[n]}(\tau)h^{n}+\mathcal{O}(h^{r}).
\end{equation*}
Then by differentiating $g(w(\tau))$ with respect to $h$ at zero iteratively and using
$$p^{[n]}(\tau)=\frac{1}{n!}\frac{\partial^{n}g(w(\tau))}{\partial h^{n}}|_{h=0},\quad\text{the degree of $\frac{\partial^{n}w(\tau)}{\partial h^{n}}|_{h=0}\leq n$},\quad {n=0,1,\ldots,r-1,}$$
it can be observed that $p^{[n]}(\tau)$ is a vector with polynomials entries of degrees $\leq n$ for $n=0,1,\ldots,r-1$ and the first statement is confirmed.

As for the second statement,
using Proposition \ref{prop1}, we have
\begin{equation*}
\begin{aligned}
&P_{h}w(\tau)-w(\tau)\\
&=\langle\sum_{n=0}^{r-1}P_{\tau,\sigma}^{[n]}h^{n},\sum_{k=0}^{r-1}w^{[k]}(\sigma)h^{k}\rangle_{\sigma}-\sum_{m=0}^{r-1}w^{[m]}(\tau)h^{m}+\mathcal{O}(h^{r})\\
&=\sum_{m=0}^{r-1}(\sum_{n+k=m}\langle P_{\tau,\sigma}^{[n]},w^{[k]}(\sigma)\rangle_{\sigma}-w^{[m]}(\tau))h^{m}+\mathcal{O}(h^{r})\\
&=\sum_{m=0}^{r-1}(\langle P_{\tau,\sigma}^{[0]},w^{[m]}(\sigma)\rangle_{\sigma}-w^{[m]}(\tau))h^{m}+\mathcal{O}(h^{r})=\mathcal{O}(h^{r}).\\
\end{aligned}
\end{equation*}
\end{proof}\qed

Before going further, it may be useful to recall some standard
results in the theory of ODEs. To emphasize the dependence of the
solution to $y^{\prime}(t)=f(y(t))$ on the initial value, we assume
that $y(\cdot,\tilde{t},\tilde{y})$ solves the IVP :
\begin{equation*}
\frac{d}{dt}y(t,\tilde{t},\tilde{y})=f(y(t,\tilde{t},\tilde{y})),\quad y(\tilde{t},\tilde{t},\tilde{y})=\tilde{y}.
\end{equation*}
Clearly, this problem is equivalent to the following
integral equation
$$y(t,\tilde{t},\tilde{y})=\tilde{y}+\int_{\tilde{t}}^{t}f(y(\xi,\tilde{t},\tilde{y}))d\xi.$$
Differentiating it with respect to $\tilde{t}$ and $\tilde{y}$ and using the uniqueness of the solution results in
\begin{equation}\label{variation}
\frac{\partial y}{\partial\tilde{t}}(t,\tilde{t},\tilde{y})=-\frac{\partial y}{\partial\tilde{y}}(t,\tilde{t},\tilde{y})f(\tilde{y}).
\end{equation}
With the previous analysis results, we are in the position to give
the order of FFCFE$r$.

\begin{mytheo}\label{order}
The stage order and order of the {FFCFE$r$} method \eqref{CFE2} or
\eqref{CRK} are $r$ and $2r$ respectively, that is,
\begin{equation*} u(\tau)-y(t_{0}+\tau
h)=\mathcal{O}(h^{r+1}),
\end{equation*}
for $0<\tau<1$, and
\begin{equation*}
\quad u(1)-y(t_{0}+h)=\mathcal{O}(h^{2r+1}).
\end{equation*}
\end{mytheo}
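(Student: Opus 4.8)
The plan is to estimate the local error $u(1)-y(t_0+h)$ by comparing the numerical solution $u(\tau)$ with the exact flow, using the projection $P_h$ as the bridge. First I would record the exact solution's defining relation: writing $y(\tau)=y(t_0+\tau h)$, one has $y'(\tau)=h\,f(y(\tau))$, hence $y(\tau)=y_0+h\int_0^1 A_{\tau,\sigma}^{\mathrm{exact}}\cdots$ does \emph{not} hold verbatim, so instead I would use the identity $y(\tau)=y_0+h\langle \mathbf{1}_{[0,\tau]}(\sigma),f(y(\sigma))\rangle_\sigma$ together with the fact, from \eqref{projection} and \eqref{Aexplicit}, that $u(\tau)=y_0+h\langle A_{\tau,\sigma},f(u(\sigma))\rangle_\sigma$ where $A_{\tau,\sigma}=\int_0^\tau P_{\alpha,\sigma}\,d\alpha$. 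The key structural input is Lemma \ref{lemma1}: since $y(\tau)$ is a regular function of $h$ (its $h$-Taylor coefficients are polynomials in $\tau$ of the right degree, by repeated differentiation of $y'=hf(y)$), we get $P_h(f\circ y)(\tau)-f(y(\tau))=\mathcal{O}(h^{r})$, which after integration in $\alpha$ gives $h\langle A_{\tau,\sigma},f(y(\sigma))\rangle_\sigma - \big(y(\tau)-y_0\big)=\mathcal{O}(h^{r+1})$.

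Next I would prove the stage-order estimate $u(\tau)-y(\tau)=\mathcal{O}(h^{r+1})$ by a Gronwall/fixed-point argument. Subtracting the two integral relations,
\[
u(\tau)-y(\tau) = h\langle A_{\tau,\sigma},\,f(u(\sigma))-f(y(\sigma))\rangle_\sigma + \delta(\tau),\qquad \delta(\tau)=\mathcal{O}(h^{r+1}),
\]
and taking $\|\cdot\|_c$ norms, the same contraction constant $\beta=\varepsilon M_0 D_1<1$ from Theorem \ref{eus} gives $\|u-y\|_c\le \beta\|u-y\|_c + \|\delta\|_c$, hence $\|u-y\|_c\le (1-\beta)^{-1}\|\delta\|_c=\mathcal{O}(h^{r+1})$. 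That settles the first assertion.

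For the superconvergence at $\tau=1$ I would use the standard adjoint/variation-of-constants device, exploiting \eqref{variation}. Consider $\Psi(\tau)=y\big(t_0+h,\,t_0+\tau h,\,u(\tau)\big)$, the exact solution value at the endpoint obtained by flowing from the numerical state at time $\tau$; then $\Psi(0)=y(t_0+h)$ and $\Psi(1)=u(1)$, so $u(1)-y(t_0+h)=\int_0^1 \frac{d}{d\tau}\Psi(\tau)\,d\tau$. Differentiating and using \eqref{variation} to cancel the explicit $\tilde t$-dependence against $f(u(\tau))$, one finds
\[
\frac{d}{d\tau}\Psi(\tau)=\frac{\partial y}{\partial \tilde y}\big(t_0+h,t_0+\tau h,u(\tau)\big)\Big(u'(\tau)-h\,f(u(\tau))\Big),
\]
and $u'(\tau)-hf(u(\tau)) = h\big(P_h(f\circ u)(\tau)-f(u(\tau))\big)$ by \eqref{projection}. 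Now since $u$ is regular (Theorem \ref{eus} gives smooth $h$-dependence, and the Picard iterates inherit the polynomial-degree bounds, so Lemma \ref{lemma1} applies to $f\circ u$), this bracket is $\mathcal{O}(h^{r+1})$ — but that alone only gives order $r+1$. The extra $r$ orders come from orthogonality: the matrizant $G(\tau):=\frac{\partial y}{\partial\tilde y}(t_0+h,t_0+\tau h,u(\tau))$ is itself a regular function of $h$, and $u'-hf(u)=h\big(P_h(f\circ u)-f\circ u\big)$ is, by the defining property \eqref{DEF} of $P_h$, orthogonal in $\langle\cdot,\cdot\rangle$ to all of $Y_h$. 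Expanding $G(\tau)$ in $h$ and testing against the $h$-expansion of $P_h(f\circ u)-f\circ u$, Proposition \ref{prop1} forces the first $r$ coefficients of the pairing $\langle G(\tau)^{\intercal},\,P_h(f\circ u)(\tau)-f(u(\tau))\rangle$ — wait, one must integrate in $\tau$ first: write $\int_0^1 G(\tau)\big(u'(\tau)-hf(u(\tau))\big)d\tau = h\int_0^1 G(\tau)\big(P_h(f\circ u)(\tau)-f(u(\tau))\big)d\tau$ and replace $G$ by its degree-$(r-1)$ Taylor polynomial in $h$ up to an $\mathcal{O}(h^{r-1})\cdot\mathcal{O}(h^{r+1})$ remainder; each polynomial coefficient lies (entrywise, componentwise in $\tau$) in $P_{r-1}([0,1])$, against which $P_h(f\circ u)-f\circ u$ integrates to zero modulo $\mathcal{O}(h^r)$ by Lemma \ref{lemma1}. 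Collecting, $u(1)-y(t_0+h)=h\cdot\mathcal{O}(h^{r-1})\cdot\mathcal{O}(h^{r})+h\cdot\mathcal{O}(h^{2r})=\mathcal{O}(h^{2r+1})$.

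The main obstacle is the last step: making the orthogonality bookkeeping precise, i.e. verifying that $G(\tau)$ is genuinely regular (one needs $\frac{\partial y}{\partial \tilde y}$ to depend smoothly on $h$ with the correct polynomial-in-$\tau$ structure for its $h$-coefficients, which follows from differentiating the variational equation for $\partial y/\partial\tilde y$ and the regularity of $u$), and then combining the $\mathcal{O}(h^{r-1})$ from truncating $G$'s expansion with the $\mathcal{O}(h^{r})$ from Lemma \ref{lemma1} applied to $P_h(f\circ u)-f\circ u$ — being careful that the pairing is in the $\langle\cdot,\cdot\rangle$ inner product so that Lemma \ref{lemma1}'s conclusion (a statement about $P_h w - w$, not about an arbitrary bilinear pairing) genuinely applies after the degree count. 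Everything else is routine estimation along the lines already used in the proof of Theorem \ref{eus}.
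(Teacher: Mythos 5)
Your strategy coincides with the paper's at the decisive step: the superconvergence at $\tau=1$ is obtained from the flow identity $u(1)-y(t_{0}+h)=\int_{0}^{1}\frac{d}{d\tau}y(t_{0}+h,t_{0}+\tau h,u(\tau))\,d\tau$, the relation $u'(\tau)-hf(u(\tau))=h\bigl(P_{h}(f\circ u)(\tau)-f(u(\tau))\bigr)$, Lemma \ref{lemma1}, and the Galerkin orthogonality of $E:=P_{h}(f\circ u)-f\circ u$ against $Y_{h}$; this is exactly \eqref{inter}--\eqref{pre2}. For the stage order you take a mildly different route: the paper reuses the same flow identity with endpoint $t_{0}+\tau h$ (equation \eqref{stage}) and reads off $\mathcal{O}(h^{r+1})$ directly from $E=\mathcal{O}(h^{r})$, whereas you subtract the two integral equations and absorb the defect with the contraction constant $\beta<1$ from Theorem \ref{eus}. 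Your version is sound; it trades the regularity of $u$ for the regularity of $\tau\mapsto y(t_{0}+\tau h)$ (true, since its $h$-coefficients are $y^{(n)}(t_{0})\tau^{n}/n!$), though the regularity of $u$ (the induction $u^{[m]}\in P_{m}^{d}$ via Proposition \ref{prop1}) is still needed for the endpoint estimate, and you correctly flag both it and the regularity of the matrizant $G=\Phi^{1}$ as the points requiring verification (the paper is no more explicit about the latter than you are).

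The one thing you must repair is the final order count. As written, $h\cdot\mathcal{O}(h^{r-1})\cdot\mathcal{O}(h^{r})=\mathcal{O}(h^{2r})$, one power short of the claim, and the intermediate assertion that a fixed $q\in P_{r-1}([0,1])$ pairs with $E$ ``to zero modulo $\mathcal{O}(h^{r})$'' is not strong enough. What is actually true, and what you need, is $\langle q,E\rangle=\mathcal{O}(h^{2r})$: write $q=P_{h}q+(q-P_{h}q)$, note $\langle P_{h}q,E\rangle=0$ exactly by \eqref{DEF}, and $q-P_{h}q=\mathcal{O}(h^{r})$ by Lemma \ref{lemma1} while $E=\mathcal{O}(h^{r})$. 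With that correction your tally becomes $h\sum_{n}h^{n}\langle G^{[n]},E\rangle+h\cdot\mathcal{O}(h^{r})\cdot\mathcal{O}(h^{r})=\mathcal{O}(h^{2r+1})$ as required. The paper sidesteps this double expansion by projecting the rows of $\Phi^{1}$ themselves: $\Phi_{i}^{1}=P_{h}\Phi_{i}^{1}+\mathcal{O}(h^{r})$ and $\int_{0}^{1}P_{h}\Phi_{i}^{1}(\alpha)\bigl(f(u(\alpha))-P_{h}(f\circ u)(\alpha)\bigr)\,d\alpha=0$ exactly (via \eqref{inner}), giving $h\cdot\mathcal{O}(h^{r})\cdot\mathcal{O}(h^{r})=\mathcal{O}(h^{2r+1})$ in one line; I recommend adopting that form, since it eliminates the $h$-Taylor expansion of $G$ and the attendant bookkeeping where your slip occurred.
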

\begin{proof}
Firstly, by Theorem \ref{eus} and Lemma \ref{smooth}, we can expand
$u(\tau)$ and $A_{\tau,\sigma}$ with respect to $h$ at zero:
\begin{equation*}
u(\tau)=\sum_{m=0}^{r-1}u^{[m]}(\tau)h^{m}+\mathcal{O}(h^{r}),\quad A_{\tau,\sigma}=\sum_{m=0}^{r-1}A_{\tau,\sigma}^{[m]}h^{m}+\mathcal{O}(h^{r}).
\end{equation*}
Then let
\begin{equation*}
\delta=u(\sigma)-y_{0}=\sum_{m=1}^{r-1}u^{[m]}(\sigma)h^{m}+\mathcal{O}(h^{r})=\mathcal{O}(h).
\end{equation*}
Expanding $f(u(\sigma))$ at $y_{0}$ and inserting the above equalities into the
first equation of \eqref{CRK}, we obtain
\begin{equation}\label{comp}
\sum_{m=0}^{r-1}u^{[m]}(\tau)h^{m}=
y_{0}+h\int_{0}^{1}\sum_{k=0}^{r-1}A_{\tau,\sigma}^{[k]}h^{k}\sum_{n=0}^{r-1}F^{(n)}(y_{0})(\underbrace{\delta,\ldots,\delta}_{n-fold})d\sigma+\mathcal{O}(h^{r}),
\end{equation}
where $F^{(n)}(y_{0})=f^{(n)}(y_{0})/n!.$ We claim that $u(\tau)$
is regular, i.e. $u^{[m]}(\tau)\in
P_{m}^{d}=\underbrace{P_{m}([0,1])\times\ldots\times
P_{m}([0,1])}_{d-fold}$ for $m=0,1,\ldots,r-1$. This fact
can be confirmed by induction. Clearly, $u^{[0]}(\tau)=y_{0}\in
P_{0}^{d}$. If $u^{[n]}(\tau)\in P_{n}^{d}, n=0,1,\ldots,m$, then by
comparing the coefficients of $h^{m+1}$ on both sides of
\eqref{comp} and using \eqref{Aexplicit} and Proposition
\ref{prop1}, we obtain that
\begin{equation*}
\begin{aligned}
&u^{[m+1]}(\tau)=
\sum_{k+n=m}\int_{0}^{1}A_{\tau,\sigma}^{[k]}g_{n}(\sigma)d\sigma=\sum_{k+n=m}\int_{0}^{\tau}\int_{0}^{1}P_{\alpha,\sigma}^{[k]}g_{n}(\sigma)d\sigma d\alpha\\
&=\int_{0}^{\tau}\int_{0}^{1}P_{\alpha,\sigma}^{[0]}g_{m}(\sigma)d\sigma d\alpha
=\int_{0}^{\tau}g_{m}(\alpha)d\alpha\in P_{m+1}^{d},\quad g_{n}(\sigma)\in P_{n}^{d}.\\
\end{aligned}
\end{equation*}
This completes the induction. By Lemma \ref{lemma1}, $f(u(\tau))$
is also regular and
\begin{equation}\label{error}
f(u(\tau))-P_{h}(f\circ u)(\tau)=\mathcal{O}(h^{r}).
\end{equation}
Then it follows from \eqref{projection}, \eqref{variation} and \eqref{error} that
\begin{equation}\label{stage}
\begin{aligned}
&u(\tau)-y(t_{0}+\tau h)=y(t_{0}+\tau h,t_{0}+\tau h,u(\tau))-y(t_{0}+\tau h,t_{0},y_{0})\\
&=\int_{0}^{\tau}\frac{d}{d\alpha}y(t_{0}+\tau h,t_{0}+\alpha h,u(\alpha))d\alpha\\
&=\int_{0}^{\tau}(h\frac{\partial y}{\partial\tilde{t}}(t_{0}+\tau h,t_{0}+\alpha h,u(\alpha))+
\frac{\partial y}{\partial\tilde{y}}(t_{0}+\tau h,t_{0}+\alpha h,u(\alpha))u^{\prime}(\alpha))d\alpha\\
&=-h\int_{0}^{\tau}\Phi^{\tau}(\alpha)(f(u(\alpha))-P_{h}(f\circ u)(\alpha))d\alpha\\
&=\mathcal{O}(h^{r+1}),\\
\end{aligned}
\end{equation}
where
$$\Phi^{\tau}(\alpha)=\frac{\partial y}{\partial\tilde{y}}(t_{0}+\tau h,t_{0}+\alpha h,u(\alpha)).$$
As for the algebraic order, setting
$\tau=1$ in \eqref{stage} leads to
\begin{equation}\label{inter}
\begin{aligned}
&u(1)-y(t_{0}+h)\\
&=-h\int_{0}^{1}\Phi^{1}(\alpha)(f(u(\alpha))-P_{h}(f\circ u)(\alpha))d\alpha.\\
\end{aligned}
\end{equation}
Since $\Phi^{1}(\alpha)$ is a matrix-valued function, we partition it
as
$\Phi^{1}(\alpha)=(\Phi_{1}^{1}(\alpha),\ldots,\Phi_{d}^{1}(\alpha))^{\intercal}$.
Using Lemma \ref{lemma1} again leads to
\begin{equation}\label{pre1}
\Phi_{i}^{1}(\alpha)=P_{h}\Phi_{i}^{1}(\alpha)+\mathcal{O}(h^{r}),\quad i=1,2,\ldots,d.
\end{equation}
Meanwhile, setting $w(\alpha)=P_{h}\Phi_{i}(\alpha)^{\intercal}$ in
\eqref{inner} and using \eqref{projection} yield
\begin{equation}\label{pre2}
\int_{0}^{1}P_{h}\Phi_{i}^{1}(\alpha)f(u(\alpha))d\alpha=h^{-1}\int_{0}^{1}P_{h}\Phi_{i}^{1}(\alpha)u^{\prime}(\alpha)d\alpha
=\int_{0}^{1}P_{h}\Phi_{i}^{1}(\alpha)P_{h}(f\circ u)(\alpha)d\alpha,\quad i=1,2,\ldots,d.
\end{equation}
Therefore, using \eqref{inter}, \eqref{pre1} and \eqref{pre2} we have
\begin{equation*}
\begin{aligned}
&u(1)-y(t_{0}+h)\\
&=-h\int_{0}^{1}\left(\left(\begin{array}{c}P_{h}\Phi_{1}^{1}(\alpha)\\ \vdots\\P_{h}\Phi_{d}^{1}(\alpha)\end{array}\right)+\mathcal{O}(h^{r})\right)(f(u(\alpha))-P_{h}(f\circ u)(\alpha))d\alpha\\
&=-h\int_{0}^{1}\left(\begin{array}{c}P_{h}\Phi_{1}^{1}(\alpha)(f(u(\alpha))-P_{h}(f\circ u)(\alpha))\\ \vdots\\P_{h}\Phi_{d}^{1}(\alpha)(f(u(\alpha))-P_{h}(f\circ u)(\alpha))\end{array}\right)d\alpha-h\int_{0}^{1}\mathcal{O}(h^{r})\times\mathcal{O}(h^{r})d\alpha=\mathcal{O}(h^{2r+1}).\\
\end{aligned}
\end{equation*}
\end{proof}\qed

{According to Theorem \ref{order}, the TF CFE methods based on the
spaces \eqref{TF1CFE}, \eqref{TF2CFE} and \eqref{TF3CFE} are of
order $2r$, $4k$ and $2(k+p+1)$, respectively.}

\section{Implementation issues}\label{IMPLE}
It should be noted that \eqref{CRK} is not a practical form for
applications. In this section, we will detail the implementation of
the {FFCFE$r$} method. Firstly, it is necessary to introduce the
generalized Lagrange interpolation functions $l_{i}(\tau)\in X_{h}$
with respect to $(r+1)$ distinct points
$\{d_{i}\}_{i=1}^{r+1}\subseteq[0,1]$:
\begin{equation}\label{Lagrange}
(l_{1}(\tau),\ldots,l_{r+1}(\tau))=(\widetilde{\Phi}_{1}(\tau),\widetilde{\Phi}_{2}(\tau),\ldots,\widetilde{\Phi}_{r+1}(\tau))\Lambda^{-1},
\end{equation}
where {$\{\Phi_{i}(t)\}_{i=1}^{r+1}$ is a basis of $X$,
$\widetilde{\Phi}_{i}(\tau)=\Phi_{i}(t_{0}+\tau h)$} and
\begin{equation*}
\Lambda=\left(\begin{array}{cccc}\widetilde{\Phi}_{1}(d_{1})&\widetilde{\Phi}_{2}(d_{1})&\ldots&\widetilde{\Phi}_{r+1}(d_{1})\\
\widetilde{\Phi}_{1}(d_{2})&\widetilde{\Phi}_{2}(d_{2})&\ldots&\widetilde{\Phi}_{r+1}(d_{2})\\ \vdots&\vdots& &\vdots\\
\widetilde{\Phi}_{1}(d_{r+1})&\widetilde{\Phi}_{2}(d_{r+1})&\ldots&\widetilde{\Phi}_{r+1}(d_{r+1})\\\end{array}\right).
\end{equation*}
By {means of the} expansions
\begin{equation*}
\Phi_{i}(t_{0}+d_{j}h)=\sum_{n=0}^{r}\frac{\Phi_{i}^{(n)}(t_{0})}{n!}d_{j}^{n}h^{n}+\mathcal{O}(h^{r+1}),\quad
i,j=1,2,\ldots,r+1,
\end{equation*}
we have
\begin{equation*}
\Lambda=\left(\begin{array}{cccc}1&d_{1}h&\ldots&\frac{d_{1}^{r}h^{r}}{r!}\\
1&d_{2}h&\ldots&\frac{d_{2}^{r}h^{r}}{r!}\\
\vdots&\vdots& &\vdots\\
1&d_{r+1}h&\ldots&\frac{d_{r+1}^{r}h^{r}}{r!}\\\end{array}\right)\widetilde{W}+\mathcal{O}(h^{r+1}),
\end{equation*}
where $\widetilde{W}$ is the Wronskian of
$\{\Phi_{i}(t)\}_{i=1}^{r+1}$ at $t_{0}$. Since $\widetilde{W}$ is nonsingular, $\Lambda$ is also nonsingular
for $h$ which is sufficiently small but not zero and the equation
\eqref{Lagrange} makes sense in this case. Then
$\{l_{i}(\tau)\}_{i=1}^{r+1}$ is a basis of $X_{h}$ satisfying
$l_{i}(d_{j})=\delta_{ij}$ and $u(\tau)$ can be expressed as
\begin{equation*}
u(\tau)=\sum_{i=1}^{r+1}u(d_{i})l_{i}(\tau).
\end{equation*}
Choosing
$d_{i}=(i-1)/r$ and denoting $y_{\sigma}=u(\sigma)$,
\eqref{CRK} now reads
\begin{equation}\label{PCRK}
\left\{\begin{aligned}
&y_{\sigma}=\sum_{i=1}^{r+1}y_{\frac{i-1}{r}}l_{i}(\sigma),\\
&y_{\frac{i-1}{r}}=y_{0}+h\int_{0}^{1}A_{\frac{i-1}{r},\sigma}f(y_{\sigma})d\sigma,\quad i=2,\ldots,r+1.\\
\end{aligned}\right.
\end{equation}
When $f$ is a polynomial and $\{\Phi_{i}(t)\}_{i=1}^{r+1}$ are
polynomials, trigonometrical or exponential functions, the integral
in \eqref{PCRK} can be calculated exactly. After solving this
algebraic system about variables $y_{1/r}, y_{2/r},\ldots,y_{1}$ by
iterations, we obtain the numerical solution $y_{1}\approx
y(t_{0}+h)$. Therefore, although the FFCFE$r$ method can be analysed
in the form of continuous-stage RK method \eqref{CRK}, it is indeed
an $r$-stage method in practice. If the integral cannot be directly
calculated, we approximate it by a high-order quadrature rule
$(b_{k},c_{k})_{k=1}^{s}$. The corresponding full discrete scheme of
\eqref{PCRK} is
\begin{equation}\label{DCRK}
\left\{\begin{aligned}
&y_{\sigma}=\sum_{i=1}^{r+1}y_{\frac{i-1}{r}}l_{i}(\sigma),\\
&y_{\frac{i-1}{r}}=y_{0}+h\sum_{k=1}^{s}b_{k}A_{\frac{i-1}{r},c_{k}}f(y_{c_{k}}),\quad i=2,\ldots,r+1.\\
\end{aligned}\right.
\end{equation}
By an argument which is similar to that stated at the beginning of
Section \ref{CRKK}, \eqref{DCRK} is equivalent to a discrete version
of the FFCFE$r$ method \eqref{CFE2}:
\begin{equation*}\label{DCFE}
\left\{\begin{aligned}
&u(0)=y_{0},\\
&\langle v,u^{\prime}\rangle_{\tau}=h[v,f\circ u],\quad u(\tau)\in X_{h},\text{  for all  }v(\tau)\in Y_{h},\\
&y_{1}=u(1),\\
\end{aligned}\right.
\end{equation*}
where $[\cdot,\cdot]$ is the discrete inner product:
$$[w_{1},w_{2}]=[w_{1}(\tau),w_{2}(\tau)]_{\tau}=\sum_{k=1}^{s}b_{k}w_{1}(c_{k})\cdot w_{2}(c_{k}).$$
By the proof procedure of Theorem \ref{symmetry}, one can show that
the full discrete scheme is still symmetric provided the quadrature
rule is symmetric, {i.e. $c_{s+1-k}=1-c_{k}$ and $b_{s+1-k}=b_{k}$
for $k=1,2,\ldots,s$.}

Now it is clear that the practical form \eqref{PCRK} or \eqref{DCRK}
is determined by the Lagrange interpolation functions $l_{i}(\tau)$
and the coefficient $A_{\tau,\sigma}$. For the CFE$r$ method,
\begin{equation*}
Y_{h}=\text{span}\left\{1,\tau,\ldots,\tau^{r-1}\right\},\quad X_{h}=\text{span}\left\{1,\tau,\ldots,\tau^{r}\right\},
\end{equation*}
and all $l_{i}(\tau)$ for $ i=1,2,\ldots,r+1$ are Lagrange
interpolation polynomials of degrees $r$. The $A_{\tau,\sigma}$ for
$r=2,3,4$ are given by
\begin{equation*}
A_{\tau,\sigma}=\left\{\begin{aligned}
&(4+6\sigma(-1+\tau)-3\tau)\tau,\quad r=2,\\
&\tau(9-18\tau+10\tau^{2}+30\sigma^{2}(1-3\tau+2\tau^2)-12\sigma(3-8\tau+5\tau^{2})),\quad r=3,\\
&\tau(16-60\tau+80\tau^{2}-35\tau^{3}+140\sigma^{3}(-1+6\tau-10\tau^{2}+5\tau^{3})\\
&+60\sigma(-2+10\tau-15\tau^{2}+7\tau^{3})-30\sigma^{2}(-8+45\tau-72\tau^{2}+35\tau^{3})),\quad r=4.\\
\end{aligned}\right.
\end{equation*}
For the TFCFE$r$ method,
\begin{equation*}
Y=\text{span}\left\{1,t,\ldots,t^{r-3},\cos(\omega t),\sin(\omega t)\right\},
\end{equation*}
then
\begin{equation*}
Y_{h}=\text{span}\left\{1,\tau,\ldots,\tau^{r-3},\cos(\nu\tau),\sin(\nu\tau)\right\},\quad
X_{h}=\text{span}\left\{1,\tau,\ldots,\tau^{r-2},\cos(\nu\tau),\sin(\nu\tau)\right\},
\end{equation*}
where $\nu=h\omega$. The corresponding $A_{\tau,\sigma}$ and
$l_{i}(\tau)$ are more complicated than those of CFE$r$, but one can
calculate them by the formulae \eqref{Aexplicit} and
\eqref{Lagrange} without any difficulty before solving the IVP
numerically. Thus the computational cost of the TFCFE$r$ method at
each step is comparable to that of the CFE$r$ method. Besides, when
$\nu$ is small, to avoid unacceptable cancellation, it is
recommended to calculate variable coefficients in TF methods by
their Taylor expansions with respect to $\nu$ at zero.

\section{Numerical experiments}\label{NE}
In this section, we carry out four numerical experiments to test the
effectiveness and efficiency of the new methods TFCFE$r$ based on
the space \eqref{TF1CFE} for $r=2,3,4$ {and TF2CFE4 based on the
space \eqref{TF2CFE}} in the long-term computation of structure
preservation. These new methods are compared with standard $r$-stage
$2r$th-order EP {CFE$r$ methods} for $r=2,3,4$. Other methods such
as the $2$-stage $4$th-order EF symplectic Gauss-Legendre
collocation method (denoted by EFGL$2$) derived in \cite{Calvo2008}
and the $2$-stage $4$th-order EF EP method (denoted by EFCRK$2$)
derived in \cite{Miyatake2014} are also considered. Since all of
these structure-preserving methods are implicit, fixed-point
iterations are needed to solve the nonlinear algebraic systems at
each step. The tolerance error for the iteration solution is set to
$10^{-15}$ in the numerical simulation.

Numerical quantities with which we are mainly concerned are the Hamiltonian error
$$EH=(EH^{0},EH^{1},\ldots),$$
with
$$EH^{n}=|H(y_{n})-H(y_{0})|,$$
and the solution error
$$ME=(ME^{0},ME^{1},\ldots),$$
with
$$ME^{n}=||y_{n}-y(t_{n})||_{\infty}.$$
Correspondingly, the maximum global errors of Hamiltonian (GEH) and
the solution (GE) are defined by:
$$GEH=\max_{n\geq0}EH^{n},\quad GE=\max_{n\geq0}ME^{n},$$
respectively. Here the numerical solution at the time node $t_{n}$
is denoted by $y_{n}$.

\begin{myexp}\label{exp1}
Consider the Perturbed Kepler problem defined by the Hamiltonian:
\begin{equation*}
H(p,q)=\frac{1}{2}(p_{1}^{2}+p_{2}^{2})-\frac{1}{(q_{1}^{2}+q_{2}^{2})^{\frac{1}{2}}}-\frac{2\varepsilon+\varepsilon^{2}}{3(q_{1}^{2}+q_{2}^{2})^{\frac{3}{2}}},
\end{equation*}
with the initial condition $q_{1}(0)=1,q_{2}=0,p_{1}(0)=0,p_{2}=1+\varepsilon,$  where $\varepsilon$ is a small parameter.
The exact solution of this IVP is
$$q_{1}(t)=\cos((1+\varepsilon)t),\quad q_{2}(t)=\sin((1+\varepsilon)t),\quad p_{i}(t)=q_{i}^{\prime}(t),\ i=1,2.$$
Taking $\omega=1$, $\varepsilon=0.001$ and $h=1/2^{i},i=-1,0,\ldots,6,$ we integrate this problem over the
interval $[0,200\pi]$ by the TF2CFE4, TFCFE$r$ and CFE$r$ methods for $r=2,3,4.$
The nonlinear integral in the $r$-stage method is evaluated by the
$(r+1)$-point Gauss-Legendre quadrature rule. Numerical results are presented
in Fig. \ref{PKEPLER}.

From Fig. \ref{PKEPLER}(a) it can be observed
that TFCFE$r$ and TF2CFE4 methods show the expected order.
Under the same stepsize, the TF method is more accurate than the
non-TF method of the same order. Since the double precision provides
only $16$ significant digits, the numerical solution are
polluted significantly by rounding errors when the maximum global
error attains the magnitude $10^{-11}$. Fig. \ref{PKEPLER}(b) shows
that the efficiency of the TF method is higher than that of the
non-TF method of the same order. Besides, high-order methods are
more efficient than low-order ones when the stepsize is relatively
small.

In Fig. \ref{PKEPLER}(c), one can see that all of these EP methods preserve the Hamiltonian very well. The error in the Hamiltonian
are mainly contributed by the quadrature error when the stepsize $h$ is large and the rounding error when $h$ is small.

\begin{figure}[ptb]
\centering

  \begin{tabular}[c]{cccc}%
  % Requires \usepackage{graphicx}
  \subfigure[]{\includegraphics[width=5cm,height=7cm]{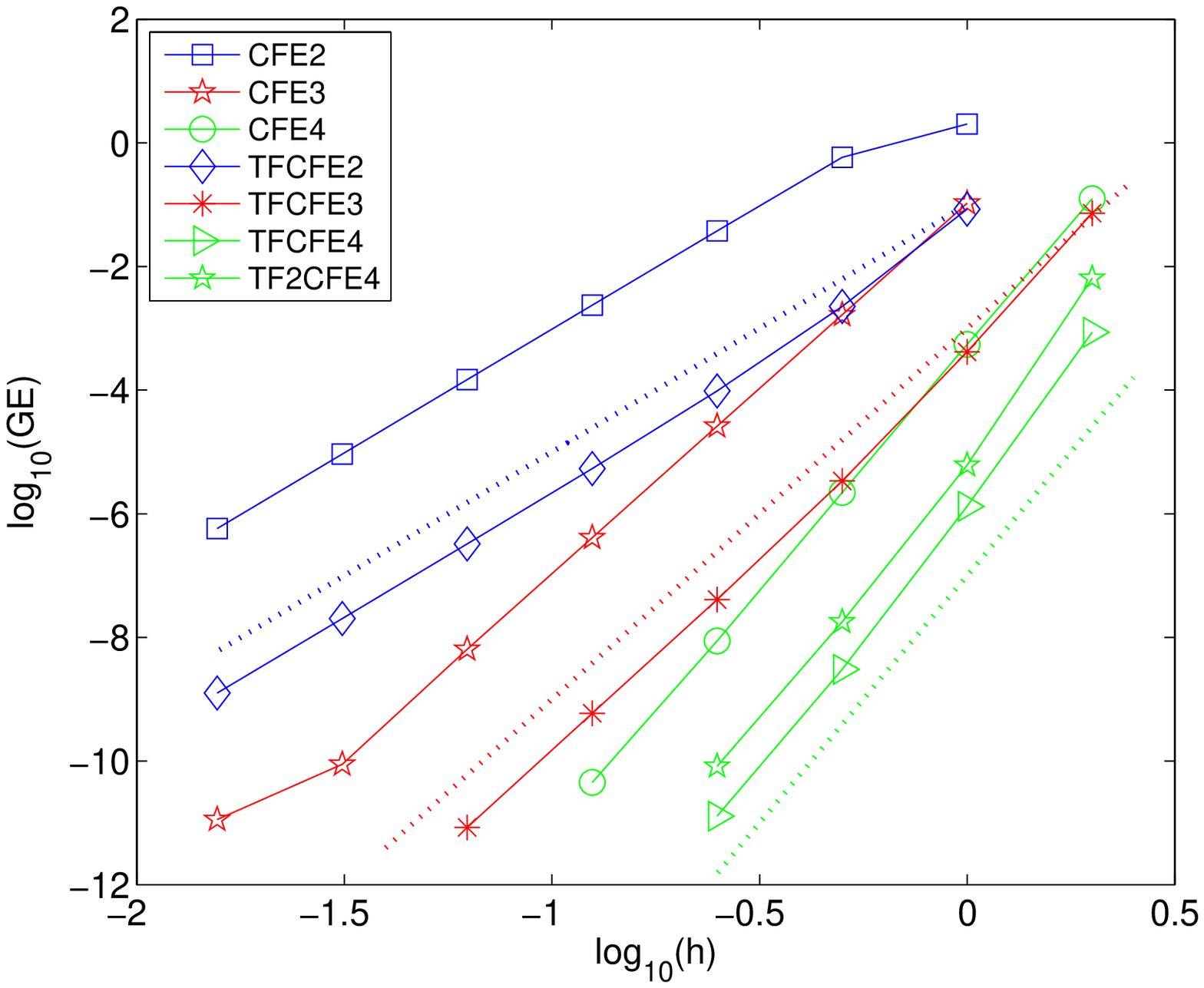}}
  \subfigure[]{\includegraphics[width=5cm,height=7cm]{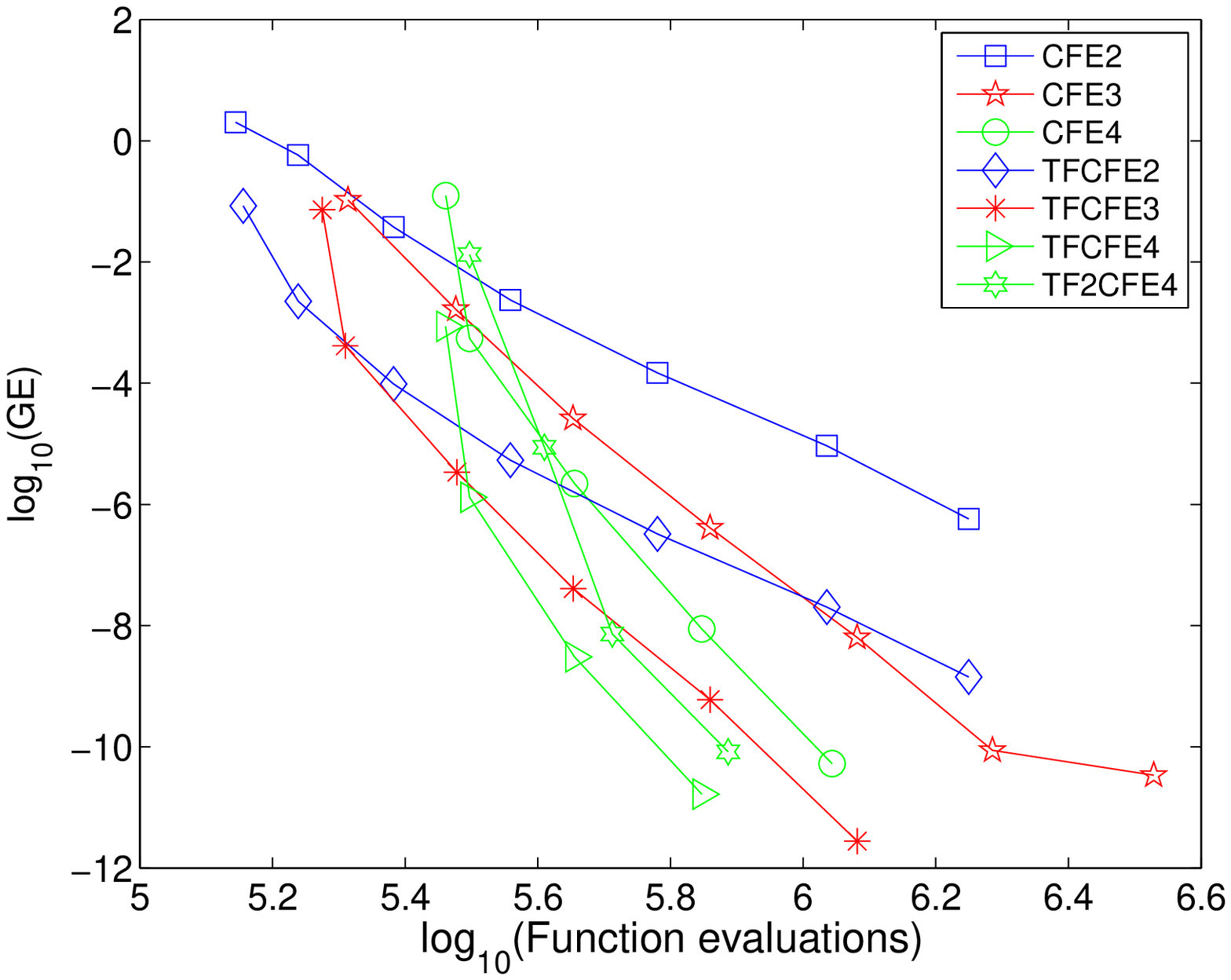}}
  \subfigure[]{\includegraphics[width=5cm,height=7cm]{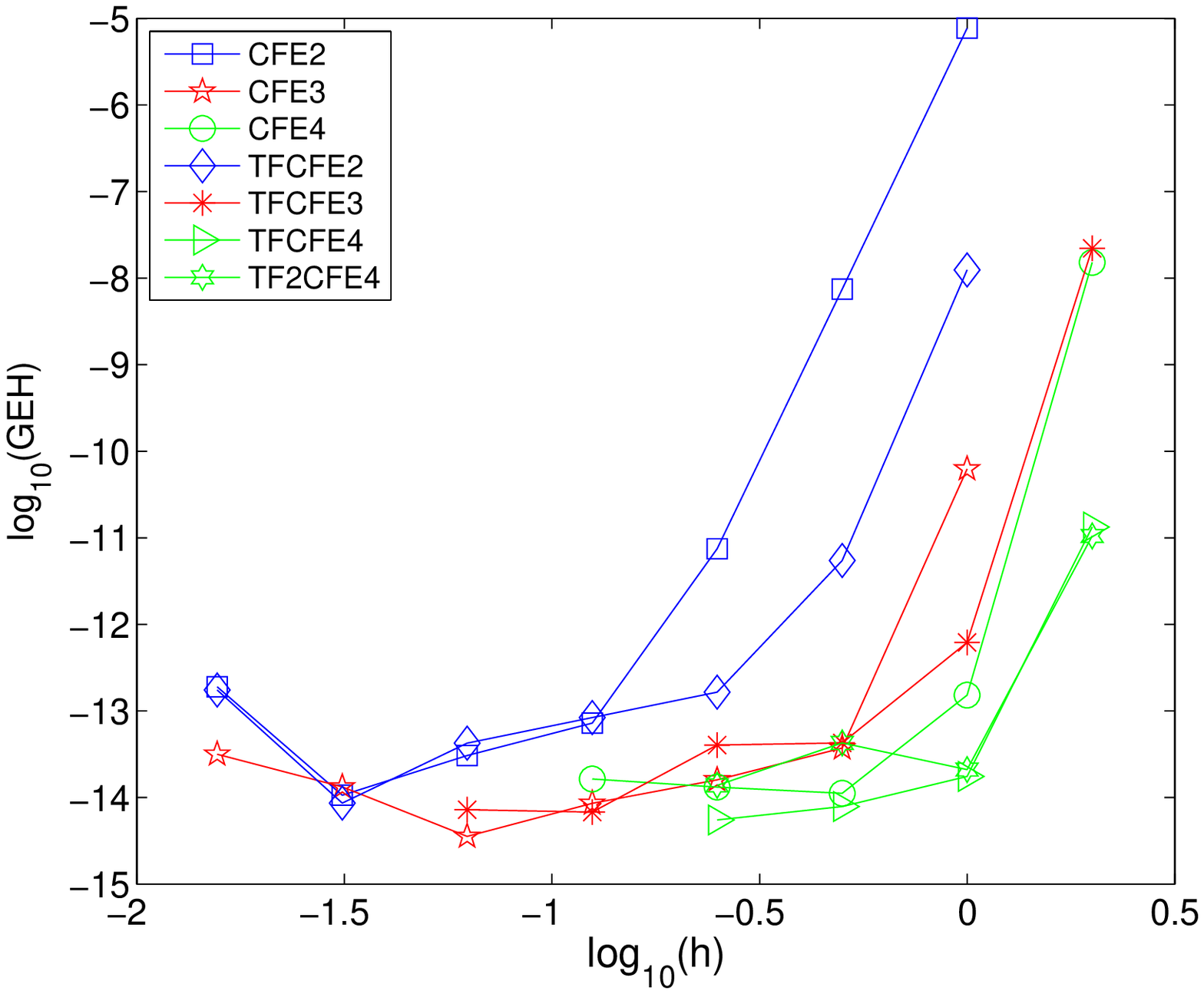}}
  \end{tabular}

\caption{(a) The logarithm of the maximum global error against the logarithm of the stepsize. The dashed lines have slopes four, six and eight.
(b) The logarithm of the maximum global error against the logarithm of function evaluations.
(c) The logarithm of the maximum global error of Hamiltonian against the logarithm of the stepsize. }
\label{PKEPLER}
\end{figure}
\end{myexp}

\begin{myexp}\label{exp2}
Consider the Duffing equation defined by the Hamiltonian :
\begin{equation*}
H(p,q)=\frac{1}{2}p^{2}+\frac{1}{2}(\omega^{2}+k^{2})q^{2}-\frac{k^{2}}{2}q^{4}
\end{equation*}
with the initial value $q(0)=0, p(0)=\omega$. The exact solution of this IVP is
$$q(t)=sn(\omega t;k/\omega),\quad p(t)=cn(\omega t;k/\omega)dn(\omega t;k/\omega).$$
{where $sn, cn$ and $dn$ are Jacobi elliptic functions.} Taking
$k=0.07, \omega=5$ and $h=1/5\times1/2^{i},i=0,1,\ldots,5,$ we
integrate this problem over the interval $[0,100]$ by TFCFE$2$,
TFCFE$3$, CFE$2$, CFE$3$ and EFCRK$2$ methods. Since the nonlinear term $f$
is polynomial, we can calculate the integrals
involved in these methods exactly by Mathematica at the beginning of the computation. Numerical results
are shown in Figs. \ref{DUFFING}.

In Fig. \ref{DUFFING}(a), one can see that the TF method is more
accurate than the non-TF method of the same order under the same
stepsize. Both as $2$-stage $4th$-order methods, TFCFE$2$ method is more
accurate than EFCRK$2$ method in this problem. Again, it can be
observed from Fig. \ref{DUFFING}(b) that the efficiency of the CFE$r$
method is lower than that of the EF/TF method of the same order.
Although the nonlinear integrals are exactly calculated in theory,
Fig. \ref{DUFFING}(c) shows that all of these method only approximately
preserve the Hamiltonian. It seems that the rounding
error increases as $h\to0$.

\begin{figure}[ptb]
\centering

  \begin{tabular}[c]{cccc}%
  % Requires \usepackage{graphicx}
  \subfigure[]{\includegraphics[width=4.5cm,height=7cm]{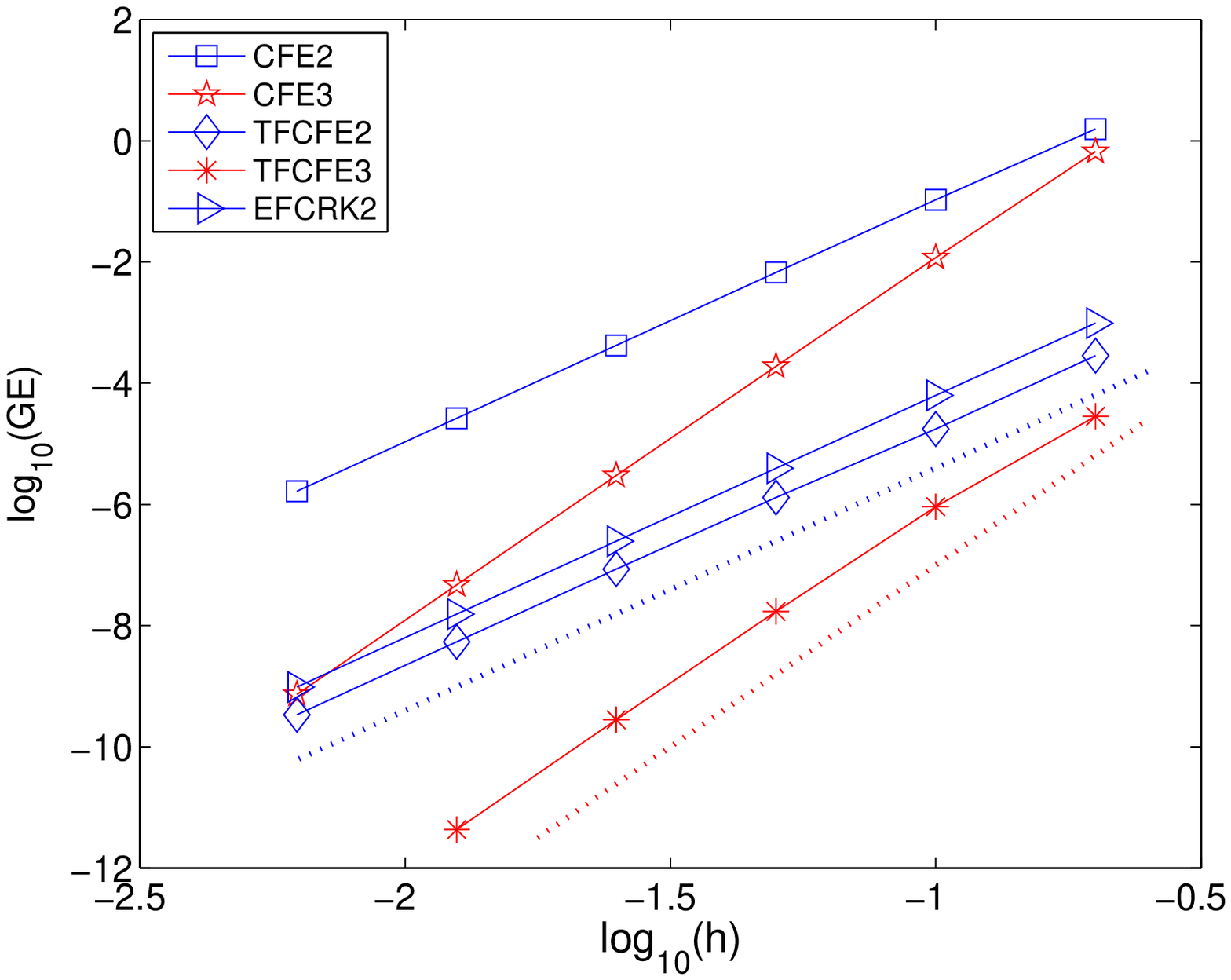}}
  \subfigure[]{\includegraphics[width=4.5cm,height=7cm]{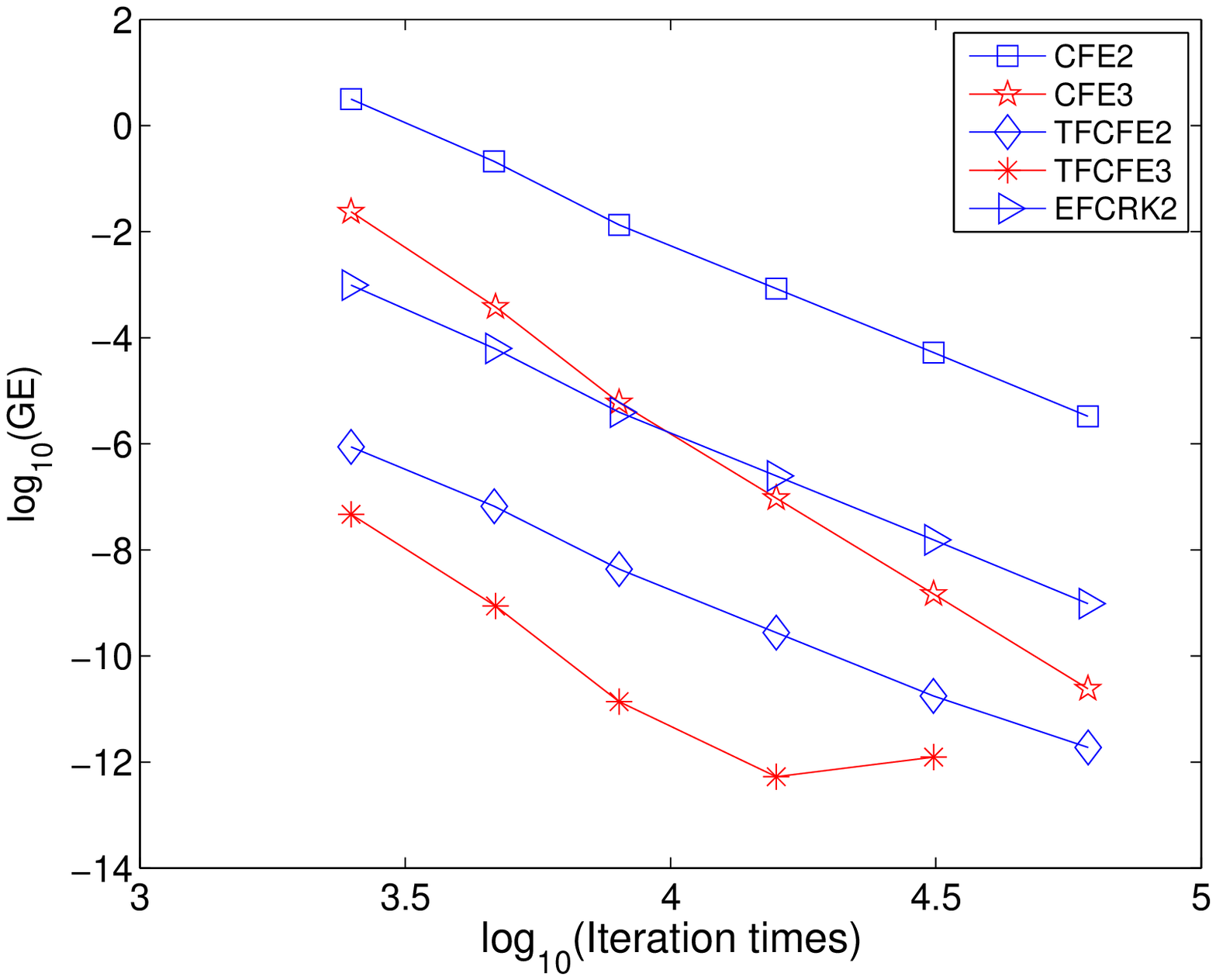}}
  \subfigure[]{\includegraphics[width=4.5cm,height=7cm]{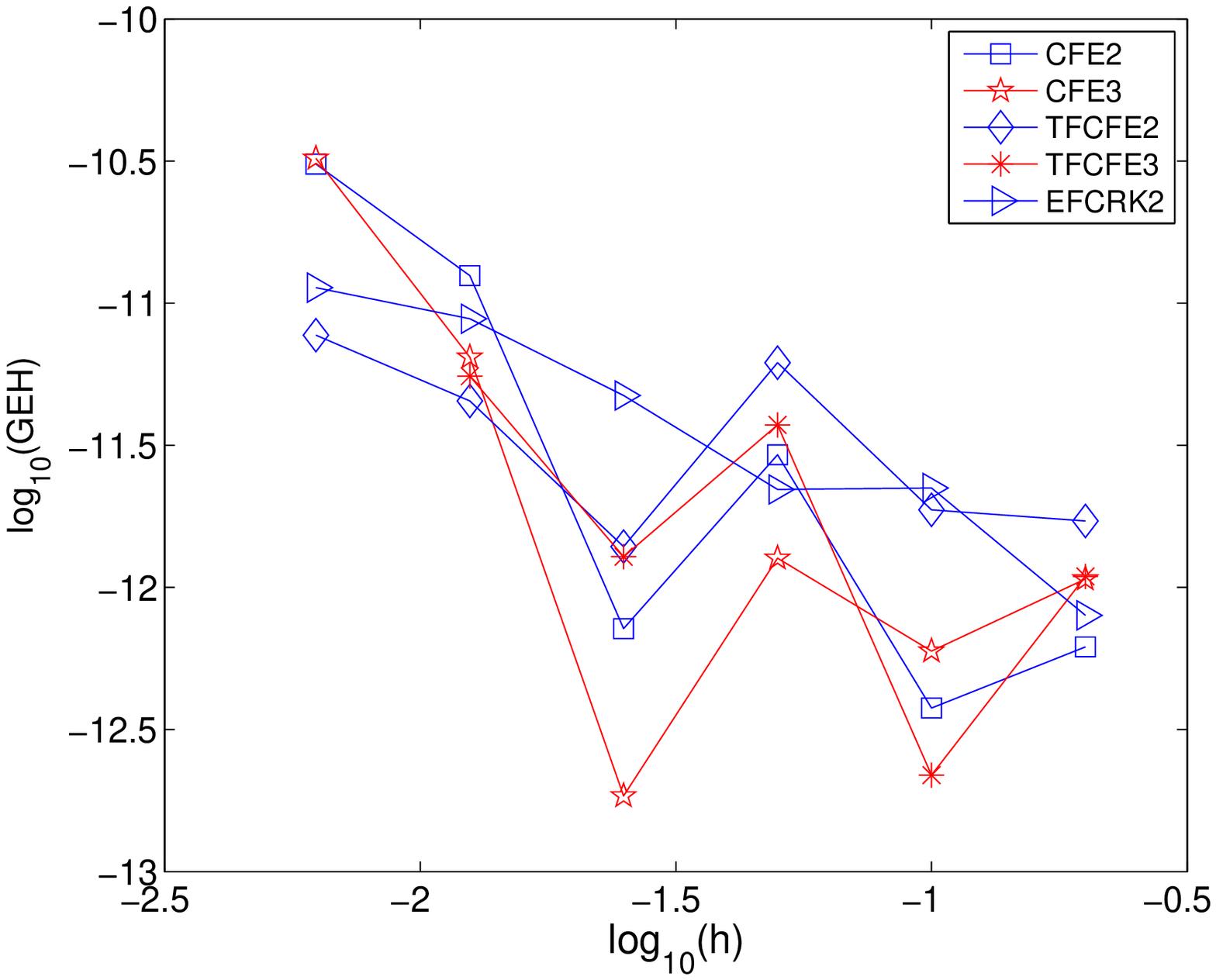}}
  \end{tabular}

\caption{(a) The logarithm of the maximum global error against the logarithm of the stepsize. The dashed lines
have slopes four and six. (b) The logarithm of the maximum global error against the logarithm of iteration times.
(c) The logarithm of the maximum global error of Hamiltonian against the logarithm of the stepsize. }
\label{DUFFING}
\end{figure}

\begin{myexp}
Consider the Fermi-Pasta-Ulam problem studied by Hairer, {\em et al} in
\cite{Hairer2000,Hairer2006}, which is defined by the Hamiltonian
\begin{equation*}
H(p,q)=\frac{1}{2}p^{\intercal}p+\frac{1}{2}q^{\intercal}Mq+U(q),
\end{equation*}
where

\begin{gather*}
M=\left(
\begin{array}
[c]{cc}%
O_{m\times m} & O_{m\times m}\\
O_{m\times m} & \omega^{2}I_{m\times m}%
\end{array}
\right)  ,\\
U(q)=\dfrac{1}{4}\left((q_{1}-q_{m+1})^{4}+\textstyle\sum\limits_{i=1}%
^{m-1}(q_{i+1}-q_{m+i+1}-q_{i}-q_{m+i})^{4}+(q_{m}+q_{2m})^{4}\right).
\end{gather*}
{In this problem, we choose $m=2, q_{1}(0)=1, p_{1}(0)=1,
q_{3}(0)=1/\omega, p_{3}(0)=1,$ and zero for the remaining initial
values.} Setting $\omega=50, h=1/50$ and $\omega=100, h=1/100$, we integrate this
problem over the interval $[0,100]$ by CFE2, CFE3, TFCFE2, TFCFE3 and EFCRK2 methods.
The nonlinear integrals are calculated exactly by Mathematica at the beginning of
the computation. We choose the numerical solution obtained by a high-order
method with a sufficiently small stepsize as the `reference
solution' in the FPU problem. Numerical results are plotted in Fig.
\ref{FPU}.

In Figs. \ref{FPU}(a), \ref{FPU}(c), one can see that the TF
methods are more accurate than non-TF ones. Unlike the previous
problem, the EFCRK$2$ method wins slightly over TFCFE$2$ method in this
case. And Figs. \ref{FPU}(b), \ref{FPU}(d) show that all of these
methods display promising EP property ,
which is especially important in the FPU problem.
\end{myexp}
\begin{figure}[ptb]
\centering
\begin{tabular}[c]{cccc}%
  % Requires \usepackage{graphicx}
  \subfigure[]{\includegraphics[width=5.5cm,height=7cm]{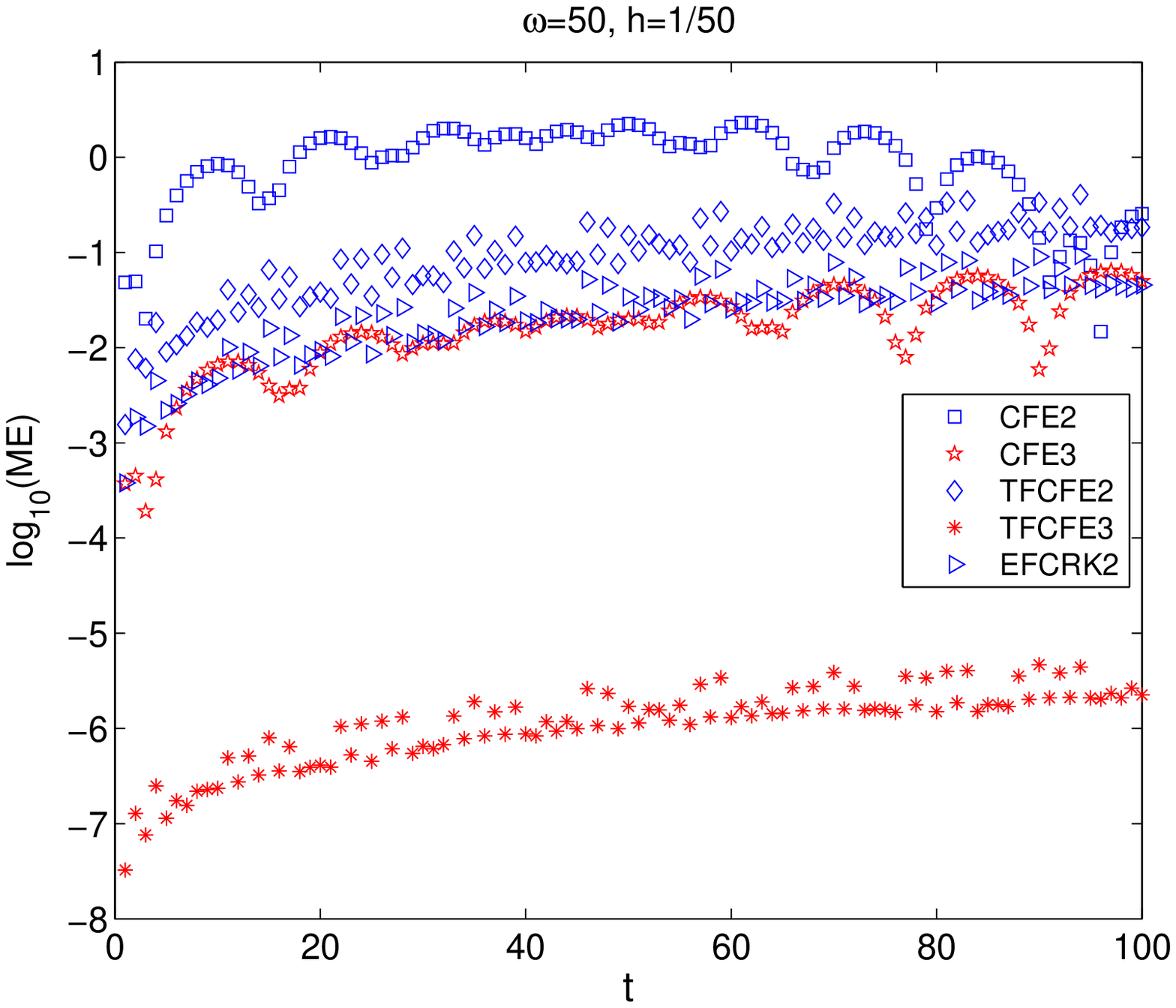}}
  \subfigure[]{\includegraphics[width=5.5cm,height=7cm]{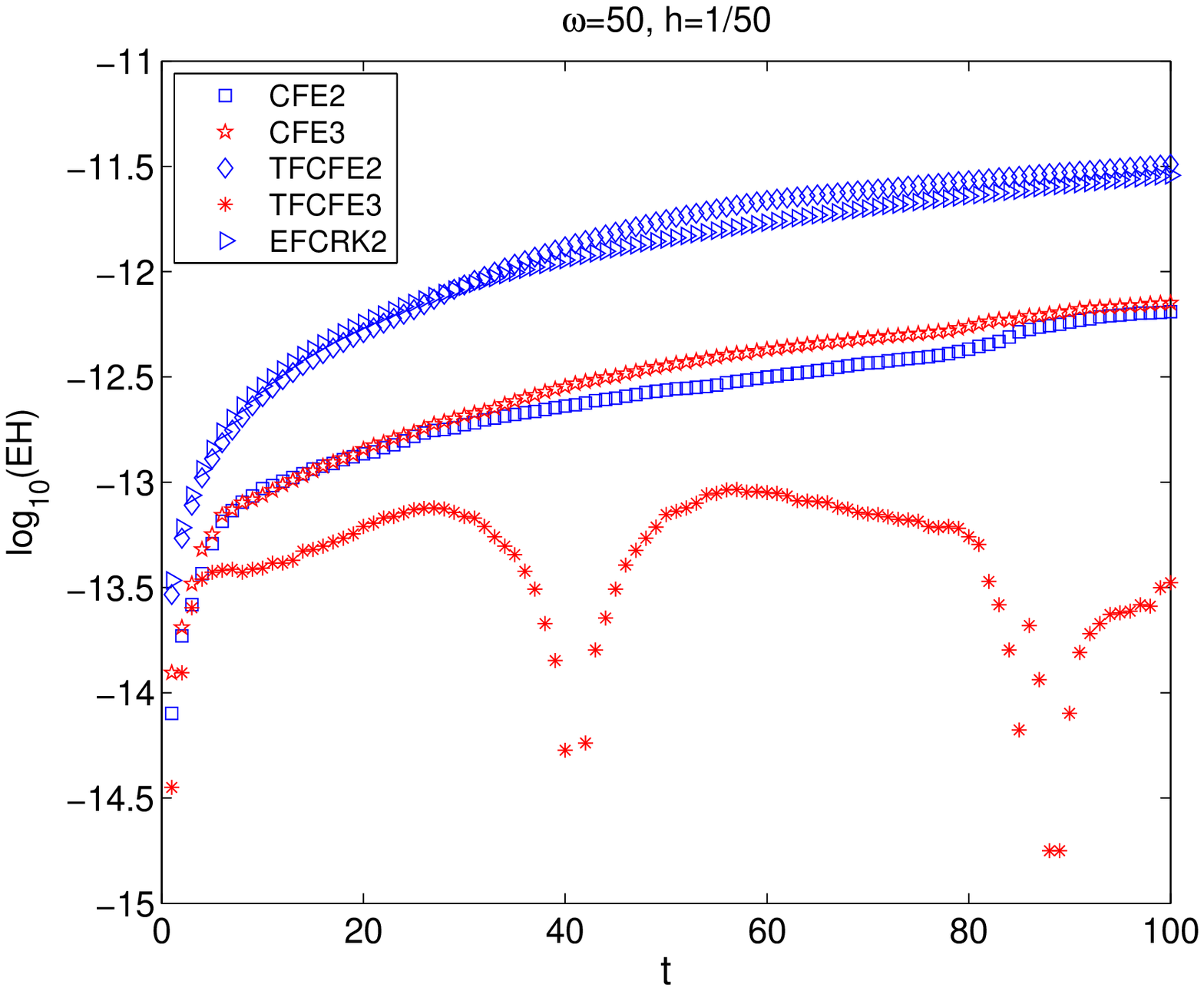}}
\end{tabular}
\begin{tabular}[c]{cccc}%
  % Requires \usepackage{graphicx}
  \subfigure[]{\includegraphics[width=5.5cm,height=7cm]{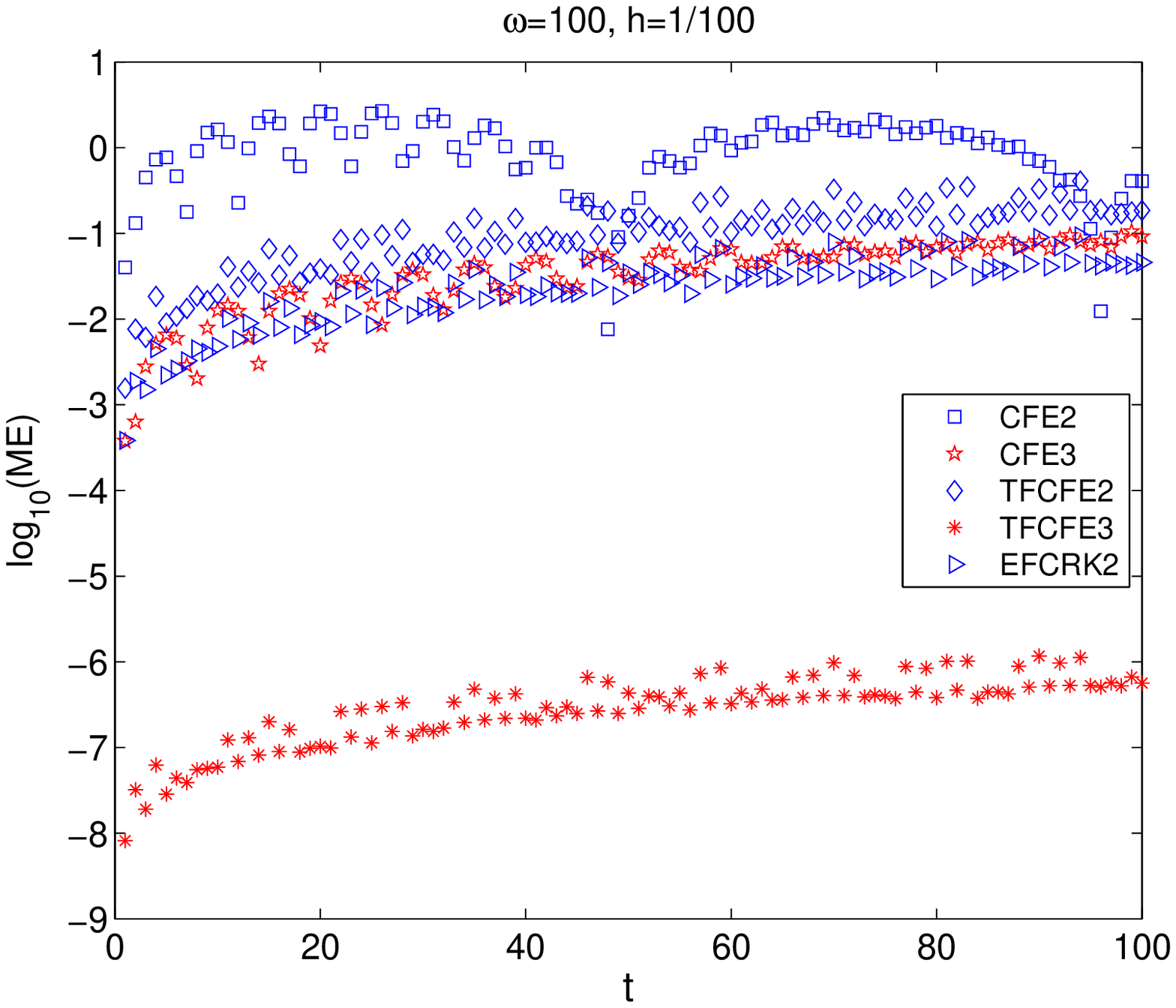}}
  \subfigure[]{\includegraphics[width=5.5cm,height=7cm]{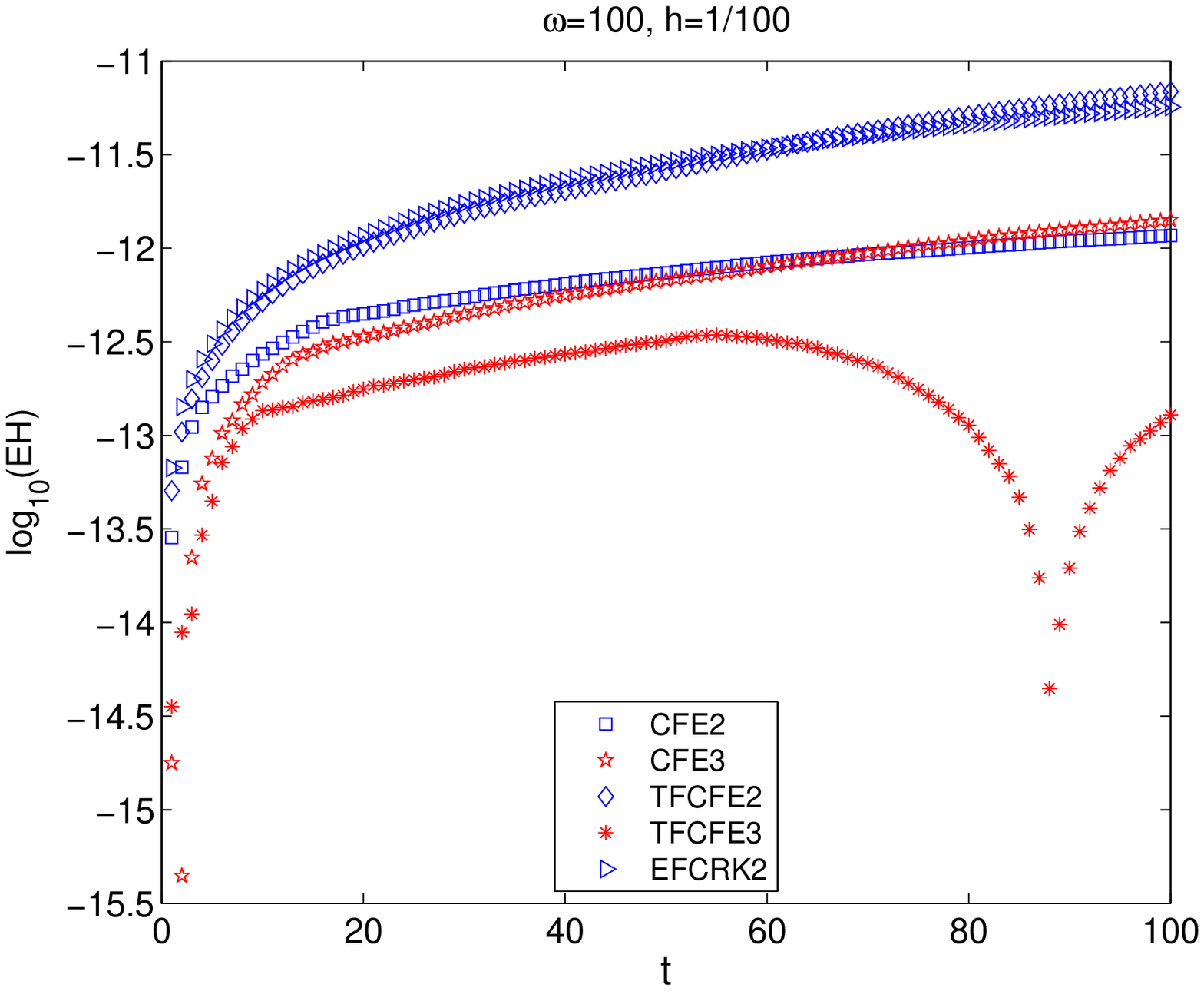}}
\end{tabular}
\caption{(a) (c) The logarithm of the solution error against
time $t$. (b) (d) The logarithm of the Hamiltonian error
against time $t$.} \label{FPU}
\end{figure}

\end{myexp}

\begin{myexp}\label{exp3}
Consider the IVP defined by the nonlinear Schr\"{o}dinger equation
\begin{equation}\label{NLSE}
\left\{\begin{aligned}
&iu_{t}+u_{xx}+2|u|^{2}u=0,\\
&u(x,0)=\varphi(x),\\
\end{aligned}\right.
\end{equation}
where $u$ is a complex function of $x, t$, and $i$ is the imaginary
unit. {Taking the periodic boundary condition
$u(x_{0},t)=u(x_{0}+L,t)$ and discretizing the spatial derivative
$\partial_{xx}$ by the pseudospectral method (see e.g.
\cite{Chen2001}),} this problem is converted into a complex system
of ODEs :
\begin{equation*}
\left\{\begin{aligned}
&i\frac{d}{dt}U+D^{2}U+2|U|^{2}\cdot U=0,\\
&U(0)=(\varphi(x_{0}),\varphi(x_{1}),\ldots,\varphi(x_{d-1}))^{\intercal},\\
\end{aligned}\right.
\end{equation*}
or an equivalent Hamiltonian system:
\begin{equation}\label{semi}
\left\{\begin{aligned}
&\frac{d}{dt}P=-D^{2}Q-2(P^{2}+Q^{2})\cdot Q,\\
&\frac{d}{dt}Q=D^{2}P+2(P^{2}+Q^{2})\cdot P,\\
&P(0)=real(U(0)),\quad Q(0)=imag(U(0)),\\
\end{aligned}\right.
\end{equation}
where the superscript `2' is the entrywise square multiplication
operation for vectors, $x_{n}=x_{0}+n\Delta x/d,\ n=0,1,\ldots,d-1,
U=(U_{0}(t),U_{1}(t),\ldots,U_{d-1}(t))^{\intercal},
P(t)=real(U(t)), Q(t)=imag(U(t))$ and $D=(D_{jk})_{0\leq j,k\leq
d-1}$ is the pseudospectral differential matrix defined by:
\begin{equation*}
D_{jk}=\left\{\begin{aligned}
&\frac{\pi}{L}(-1)^{j+k}cot(\pi\frac{x_{j}-x_{k}}{L}),\ j\neq k,\\
&0,\quad\quad\quad\quad\quad\quad\quad\quad\quad j=k.\\
\end{aligned}\right.
\end{equation*}
The Hamiltonian or the total energy of \eqref{semi} is
\begin{equation*}
H(P,Q)=\frac{1}{2}P^{\intercal}D^{2}P+\frac{1}{2}Q^{\intercal}D^{2}Q+{\frac{1}{2}\sum_{i=0}^{d-1}(P_{i}^{2}+Q_{i}^{2})^{2}.}
\end{equation*}
%In \cite{Peregrine}, the author constructed a periodic bi-soliton
%solution of \eqref{NLSE}:
%\begin{equation}\label{exact}
%u(x,t)=\frac{(\exp(iM^{2}t)M\cosh^{-1}(M(x-A))-\exp(iN^{2}t)N\cosh^{-1}(N(x+A)))}{(\cosh(J)-\sinh(J)(\tanh(M(x-A))\tanh(N(x+A))+\cos((M^{2}-N^{2})t)\cosh^{-1}(M(x-A))\cosh^{-1}(N(x+A))))},
%\end{equation}
%where
%\begin{equation*}
%J=\tanh^{-1}(\frac{2MN}{M^{2}+N^{2}}).
%\end{equation*}
In \cite{Peregrine}, the author constructed a periodic bi-soliton
solution of \eqref{NLSE}:
\begin{equation}\label{exact}
u(x,t)=\frac{U(x,t)}{V(x,t)},
\end{equation}
where
\begin{equation*}
\begin{aligned}
&U(x,t)=\exp(iM^{2}t)M\cosh^{-1}(M(x-A))-\exp(iN^{2}t)N\cosh^{-1}(N(x+A)),\\
&V(x,t)=\cosh(J)-\sinh(J)(\tanh(M(x-A))\tanh(N(x+A))\\
&+\cos((M^{2}-N^{2})t)\cosh^{-1}(M(x-A))\cosh^{-1}(N(x+A))),\\
\end{aligned}
\end{equation*}
with
\begin{equation*}
J=\tanh^{-1}(\frac{2MN}{M^{2}+N^{2}}).
\end{equation*}
This solution can be viewed approximately as the superposition of two single solitons located at $x=A$ and $x=-A$ respectively. Since it decays exponentially when $x\to\infty$, we can take the periodic boundary condition for sufficiently small $x_{0}$ and large $L$ with little loss of accuracy. Aside from the total energy, it is well known that the semi-discrete NLS \eqref{semi} has another invariant, the total charge
$$C(P,Q)=\sum_{i=0}^{d-1}(P_{i}^{2}+Q_{i}^{2}).$$
Thus we also calculate the error in the charge(EC):
$$EC=(EC^{0},EC^{1},\ldots)$$
with
$$EC^{n}=|C^{n}-C^{0}|,\quad C^{n}=C(P^{n},Q^{n})$$
{where $P^{n}\approx P(t_{n}), Q^{n}\approx Q(t_{n})$ is the
numerical solution at the time node $t_{n}$.} Taking $x_{0}=-50,
L=100, A=10, M=1, N=2^{\frac{1}{2}}, d=450, h=0.2, \omega=2, $ we
integrate the semi-discrete problem \eqref{semi} by TFCFE$2$, CFE$2$ and
EFGL$2$ methods over the time interval $[0,100]$. The nonlinear
integrals are calculated exactly by Mathematica at the beginning of
the computation. Numerical results are presented in Fig. \ref{NLS}.

It is noted that the exact solution \eqref{exact} has two
approximate frequency $M^{2}$ and $N^{2}$. By choosing the larger
frequency $N^{2}=2$ as the fitting frequency $\omega$, the EF/TF methods
still reach higher accuracy than the general-purpose method CFE2, see Fig.
\ref{NLS}(a). Among three EF/TF methods, TFCFE2 is the most accurate.
Fig. \ref{NLS}(b) shows that three EP methods CFE2, TFCFE2
and EFCRK2 preserve the Hamiltonian (apart from the rounding error). Since
EFGL2 is a symplectic method, it preserves
the discrete charge, which is a quadratic invariant, see Fig. \ref{NLS}(c).
Although TFCFE2 method cannot preserve the discrete
charge, its error in the charge is smaller than the charge error of
CFE2 and EFCRK2.

\begin{figure}[ptb]
\centering
\begin{tabular}[c]{cccc}%
  % Requires \usepackage{graphicx}
  \subfigure[]{\includegraphics[width=4.5cm,height=7cm]{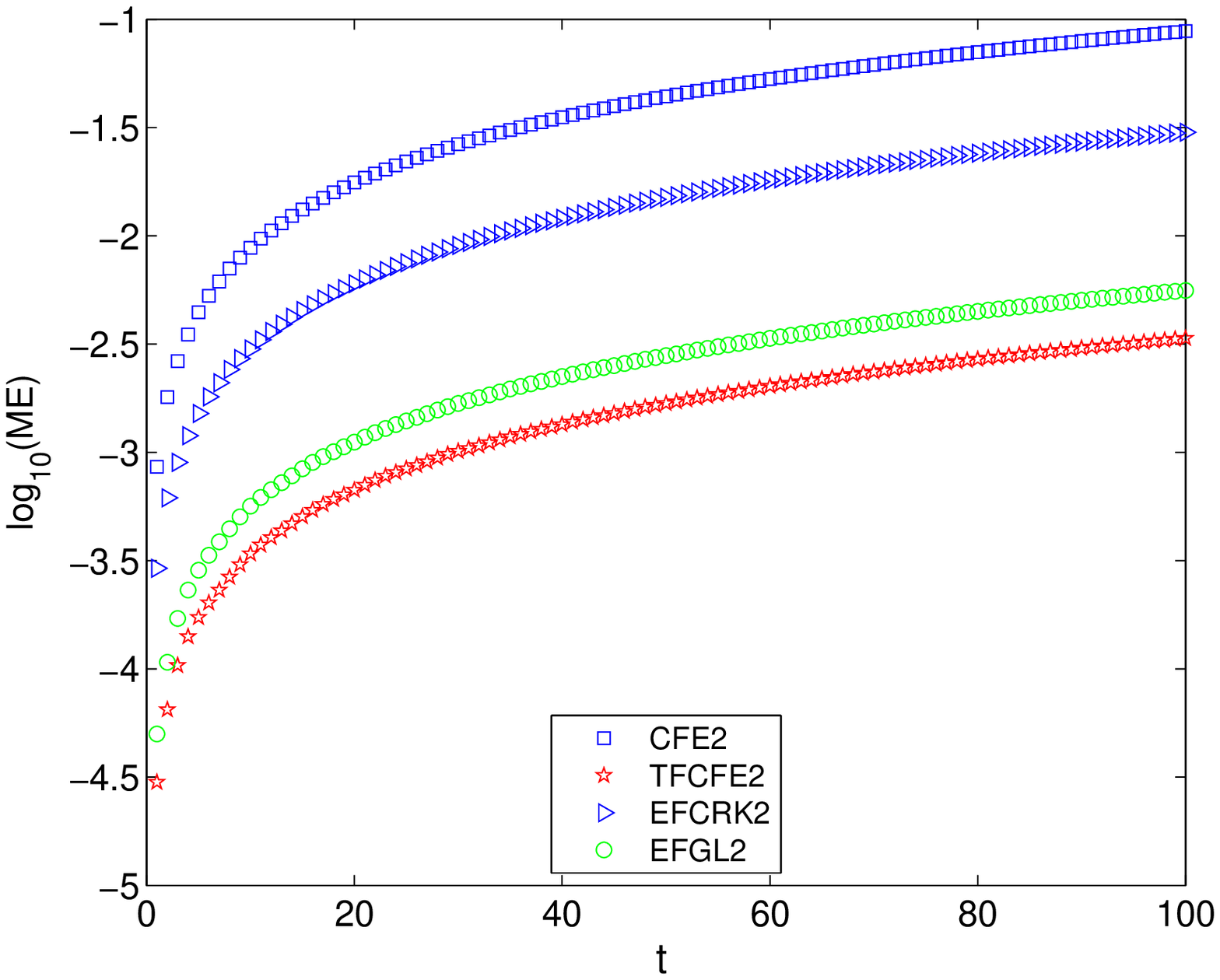}}
  \subfigure[]{\includegraphics[width=4.5cm,height=7cm]{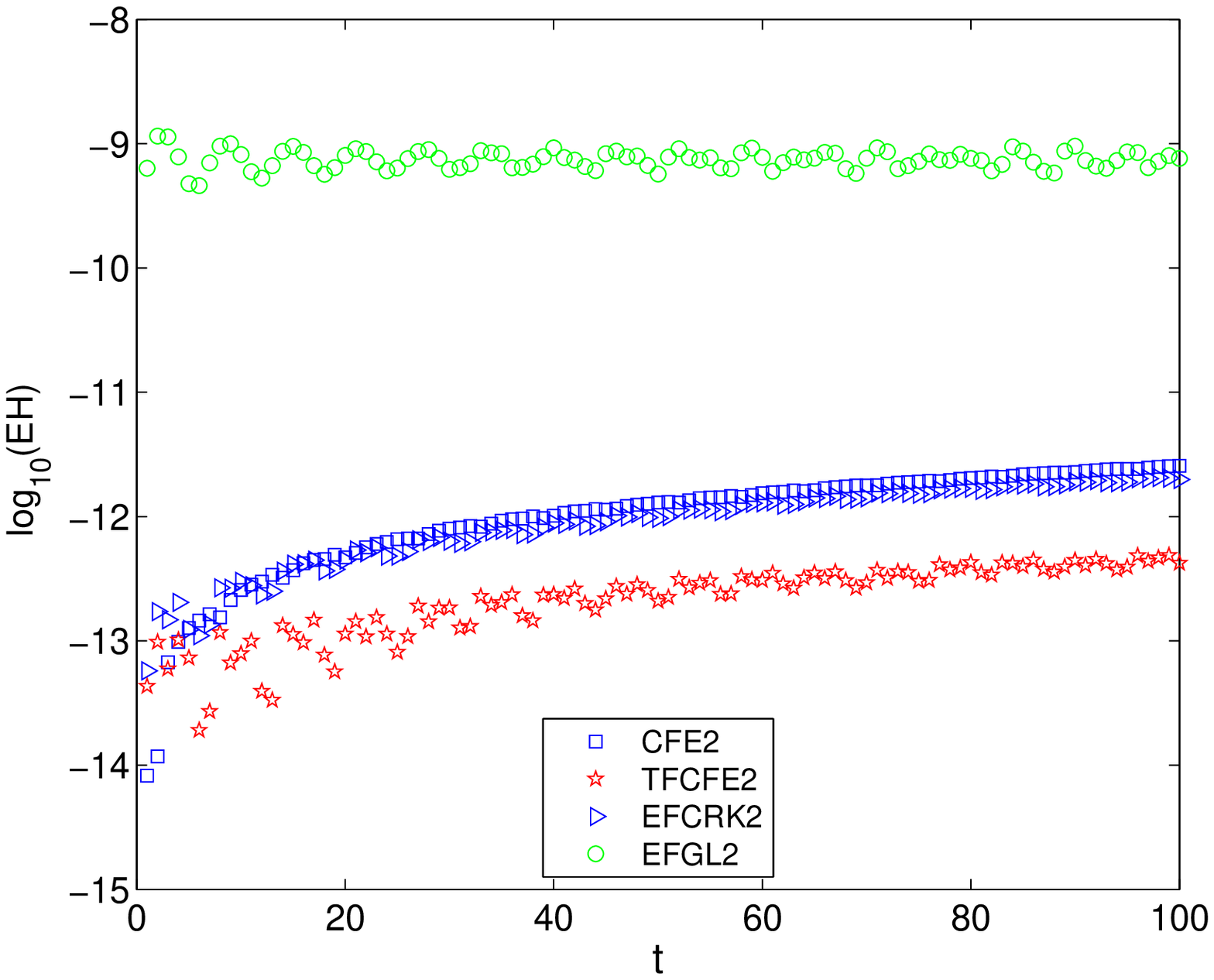}}
  \subfigure[]{\includegraphics[width=4.5cm,height=7cm]{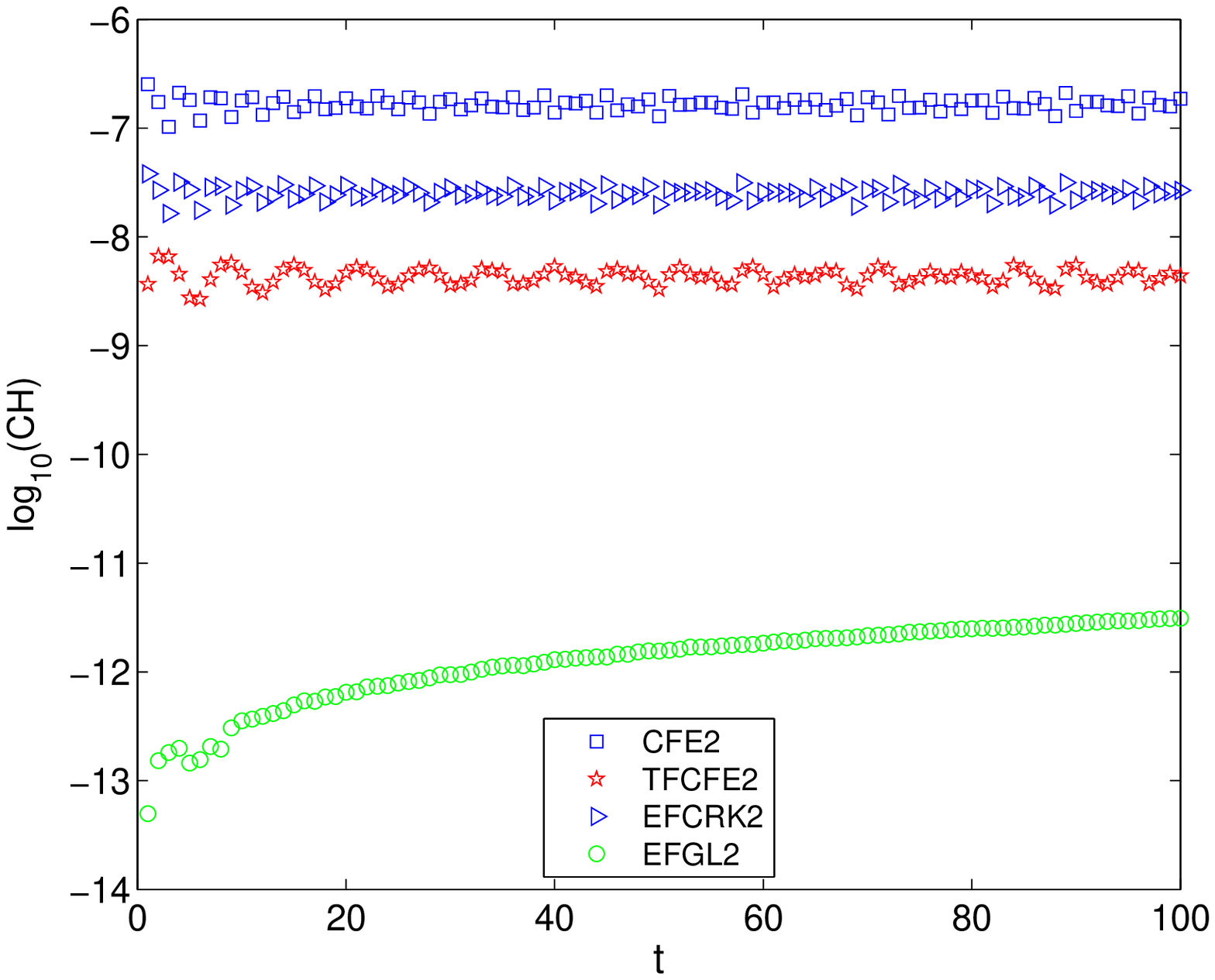}}
\end{tabular}
\caption{(a) The logarithm of the solution error against time $t$. (b) The logarithm of the Hamiltonian error against time $t$.
(c) The logarithm of the charge error against time $t$.}
\label{NLS}
\end{figure}
\end{myexp}

\section{Conclusions and discussions}

Oscillatory systems constitute an important category of differential
equations in numerical simulations. {It should be noted that} the
numerical treatment of oscillatory systems is full of challenges.
This paper is mainly concerned with the establishment of high-order
functionally-fitted energy-preserving methods for solving
oscillatory nonlinear Hamiltonian systems. To this end, we have
derived new FF EP methods FFCFE$r$ based on the analysis of
continuous finite element methods. The FFCFE$r$ method can be
thought of as the continuous-stage Runge--Kutta methods, therefore
they can be used conveniently in applications. The geometric
properties and algebraic orders of them have been analysed in
detail. By equipping FFCFE$r$ with the spaces \eqref{TF1CFE} and
\eqref{TF2CFE}, we have developed the TF EP methods denoted by
TFCFE$r$ and TF2CFE$r$ which are suitable for solving oscillatory
Hamiltonian systems with a fixed frequency $\omega$. Evaluating the
nonlinear integrals in the EP methods exactly or approximately, we
have compared TFCFE$r$ for $r=2,3,4$ and TF2CFE4 with other
structure-preserving methods such as EP methods CFE$r$ for
$r=2,3,4$, the EP method EFCRK2 and the symplectic method EFGL2. It
can be observed from the numerical results that the newly derived TF
EP methods show definitely a high accuracy, an excellent
invariant-preserving property and a prominent long-term behaviour.

In this paper, our numerical experiments are  mainly concerned with
the TFCFE$r$ method and oscillatory Hamiltonian systems. However,
the FFCFE$r$ method is still symmetric and of order $2r$ for the
general autonomous system $y^{\prime}(t)=f(y(t))$. By choosing
appropriate function spaces, the FFCFE$r$ method can be applied {to
solve a much wider class of dynamic systems in applications.} For
example, the numerical experiment concerning the application of FF
Runge--Kutta method to the stiff system has been shown in
\cite{Ozawa2001}. Consequently, the FFCFE$r$ method is likely to be
a highly flexible method with broad prospects.

\section*{Acknowledgments.}

The authors are sincerely indebted to two anonymous referees for
their valuable suggestions, which help improve the presentation of
the manuscript.

\end{document}